\def\input@path{{figures/}}\makeatother
\newtheorem{theorem}{Theorem}%[section]
\newtheorem{corollary}[theorem]{Corollary}
\newtheorem{proposition}[theorem]{Proposition}
\newtheorem{lemma}[theorem]{Lemma}
\newtheorem{conjecture}[theorem]{Conjecture}
\newtheorem*{theorem*}{Theorem}%[section]
\theoremstyle{definition}
\newtheorem{definition}[theorem]{Definition}
\newtheorem{example}[theorem]{Example}
\newtheorem{remark}[theorem]{Remark}
\crefname{notation}{Notation}{Notations}
\crefname{problem}{Problem}{Problems}
\renewcommand{\b}[1]{\boldsymbol{#1}} % bold letters
\newcommand{\f}[1]{\mathfrak{#1}} % frak letters
\newcommand{\set}[2]{\left\{ #1 \;\middle|\; #2 \right\}} % set notation
\newcommand{\bigset}[2]{\big\{ #1 \;\big|\; #2 \big\}} % big set notation
\newcommand{\ssm}{\smallsetminus} % small set minus
\newcommand{\symdif}{\,\triangle\,} % symmetric difference
\newcommand{\eqdef}{\mbox{\,\raisebox{0.2ex}{\scriptsize\ensuremath{\mathrm:}}\ensuremath{=}\,}} % :=
\newcommand{\ie}{\textit{i.e.}~} % id est
\newcommand{\eg}{\textit{e.g.}~} % exempli gratia
\definecolor{darkblue}{rgb}{0,0,0.7} % darkblue color
\definecolor{green}{RGB}{57,181,74} % darkblue color
\definecolor{violet}{RGB}{147,39,143} % darkblue color
\newcommand{\darkblue}{\color{darkblue}} % darkblue command
\newcommand{\defn}[1]{\textsl{\darkblue #1}} % emphasis of a definition
\newcommand{\OEIS}[1]{\cite[{\rm \href{http://oeis.org/#1}{\texttt{#1}}}]{OEIS}}
\newcommand{\meet}{\wedge} % meet
\newcommand{\join}{\vee} % join
\newcommand{\bigMeet}{\bigwedge} % meet
\newcommand{\bigJoin}{\bigvee} % join
\newcommandx{\projDown}[1][1={}]{\smash{\pi_\downarrow^{#1}}} % down projection map
\newcommandx{\projUp}[1][1={}]{\smash{\pi^\uparrow_{#1}}} % up projection map
\newcommand{\con}{\mathrm{con}} % congruence
\newcommandx{\JI}[1][1=L]{\mathcal{JI}(#1)} % join irreducibles
\newcommandx{\MI}[1][1=L]{\mathcal{MI}(#1)} % meet irreducibles
\newcommandx{\UJI}[1][1=\equiv]{\mathcal{UJI}(#1)} % uncontracted join irreducibles
\newcommandx{\UMI}[1][1=\equiv]{\mathcal{UMI}(#1)} % uncontracted meet irreducibles
\newcommand{\CJR}{\mathbf{cjr}} % canonical join representation
\newcommand{\CMR}{\mathbf{cmr}} % canonical meet representation
\newcommandx{\CJC}[1][1=L]{\mathcal{CJC}(#1)} % canonical join complex
\newcommandx{\CMC}[1][1=L]{\mathcal{CMC}(#1)} % canonical meet complex
\newcommandx{\CC}[1][1=L]{\mathcal{CC}(#1)} % canonical complex
\newcommand{\row}{\mathrm{row}} % rowmotion
\newcommand{\less}{\triangleleft} % less
\newcommand{\lesscover}{\mathrel{\ooalign{$\less$\cr\hidewidth\hbox{$\cdot\mkern0.8mu$}\cr}}} % less cover
\newcommand{\more}{\triangleright} % less
\newcommand{\morecover}{\mathrel{\ooalign{$\more$\cr\hidewidth\hbox{$\cdot\mkern3mu$}\cr}}} % more cover
\newcommandx{\upIdeal}[2][1=x, 2=]{\langle #1 \rangle\!_{#2}^\uparrow} % up ideal
\newcommandx{\downIdeal}[2][1=x, 2=]{\langle #1 \rangle\!^{#2}_\downarrow} % down ideal
\def\l@part{\@tocline{1}{8pt}{0pc}{}{}}
\def\l@section{\@tocline{1}{4pt}{0pc}{}{}}
\let\oldtocpart=\tocpart
\renewcommand{\tocpart}[2]{\sc\large\oldtocpart{#1}{#2}}
\let\oldtocsection=\tocsection
\renewcommand{\tocsection}[2]{\bf\oldtocsection{#1}{#2}}
\let\oldtocsubsubsection=\tocsubsubsection
\renewcommand{\tocsubsubsection}[2]{\quad\oldtocsubsubsection{#1}{#2}}
\title{The canonical complex of the weak order}
\thanks{Partially supported by the French ANR grants CAPPS~17\,CE40\,0018 and CHARMS~19\,CE40\,0017.}
\author{Doriann Albertin}
\address[DA]{LIGM, Université Gustave Eiffel, CNRS, ESIEE Paris, F-77454 Marne-la-Vallée, France}
\email{doriann.albertin@u-pem.fr}
\urladdr{\url{https://doriann-albertin.github.io/site/}}
\author{Vincent Pilaud}
\address[VP]{CNRS \& LIX, \'Ecole Polytechnique, Palaiseau}
\email{vincent.pilaud@lix.polytechnique.fr}
\urladdr{\url{http://www.lix.polytechnique.fr/~pilaud/}}
\begin{document}

\begin{abstract}
We define and study the canonical complex of a finite semidistributive lattice~$L$.
It is the simplicial complex on the join or meet irreducible elements of~$L$ which encodes each interval of~$L$ by recording the canonical join representation of its bottom element and the canonical meet representation of its top element. 
This complex behaves properly with respect to lattice quotients of~$L$, in the sense that the canonical complex of a quotient of~$L$ is the subcomplex of the canonical complex of~$L$ induced by the join or meet irreducibles of~$L$ uncontracted in the quotient.
We then describe combinatorially the canonical complex of the weak order on permutations in terms of semi-crossing arc bidiagrams, formed by the superimposition of two non-crossing arc diagrams of N.~Reading.
We provide explicit direct bijections between the semi-crossing arc bidiagrams and the weak order interval posets of G.~Ch\^atel, V.~Pilaud and V.~Pons.
Finally, we provide an algorithm to describe the Kreweras maps in any lattice quotient of the weak order in terms of semi-crossing arc bidiagrams.
\end{abstract}

\vspace*{-1.3cm}
\maketitle
\vspace{-1.2cm}

\begin{figure}[h]
	\centerline{\includegraphics[scale=.8]{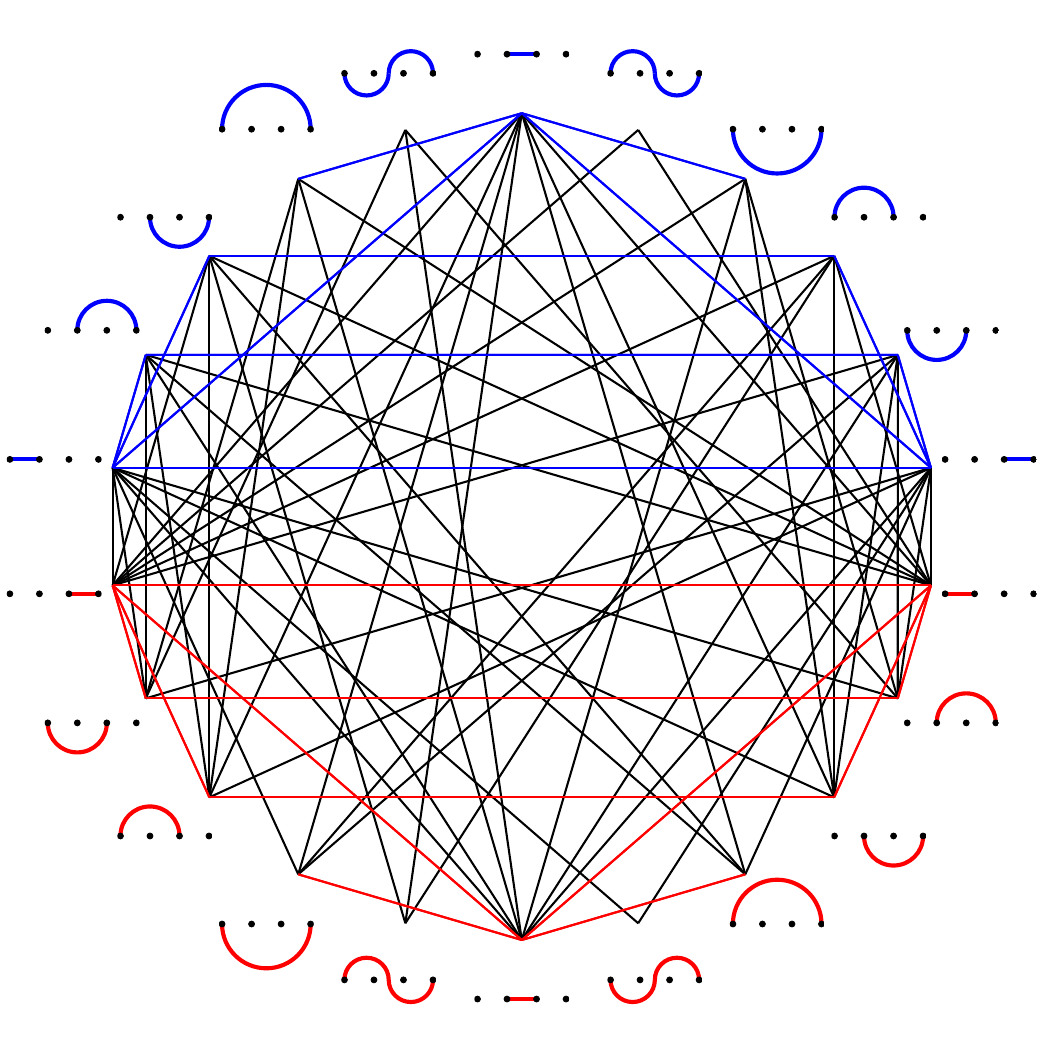}}
	\caption{The canonical complex of the weak order on $\mathfrak{S}_4$ labeled by arcs.}
	\label{fig:CCS4}
\end{figure}

\vspace{-2cm}

\newpage
\tableofcontents

%%%%%%%%%%%%%%%%%%%%%%%%%%%%%%%%%%%%%%

\section{Introduction}
\label{sec:intro}

A finite lattice~$L$ is join semidistributive when any element admits a canonical join representation (see \eg \cite{FreeseNation} for a classical reference on lattices).
This enables us to define the canonical join complex of~$L$ \cite{Reading-arcDiagrams, Barnard}, whose vertices are the join irreducible elements of~$L$ and whose simplices are the canonical join representations in~$L$.
When $L$ is both join and meet semidistributive, it thus admits both a canonical join complex and a canonical meet complex which are actually isomorphic flag simplicial complexes \cite{Barnard}.

In the first part of this paper, we define the canonical complex of a finite semidistributive lattice~$L$, a larger flag simplicial complex where the canonical join complex and the canonical meet complex naturally live and interact.
More precisely, its vertex set is the disjoint union of the set of join irreducible elements of~$L$ with the set of meet irreducible elements of~$L$, and its simplices are the disjoint unions~$J \sqcup M$ of a canonical join representation~$J$ in~$L$ with a canonical meet representation~$M$ in~$L$ such that~$\bigJoin J \le \bigMeet M$.
In other words, each interval~$[x,y]$ in~$L$ contributes to a simplex of the canonical complex given by the disjoint union of the canonical join representation of~$x$ with the canonical meet representation of~$y$.
This provides a model for the intervals of~$L$ which is compatible with lattice quotients.
Namely, the canonical complex of a quotient~$L/{\equiv}$ is the subcomplex of the canonical complex of~$L$ induced by the join and meet irreducibles of~$L$ uncontracted by the congruence~$\equiv$.

In the second part of this paper, we study the combinatorics of the canonical complex of the weak order.
N.~Reading showed in~\cite{Reading-arcDiagrams} that join irreducible permutations correspond to certain arcs wiggling around the horizontal axis, and that canonical join representations of permutations correspond to non-crossing arc diagrams.
We show that the elements of the canonical complex can be interpreted as semi-crossing arc bidiagrams, defined as pairs~$\delta_\join \sqcup \delta_\meet$ of non-crossing arc diagrams where only certain types of crossings are allowed between an arc of~$\delta_\join$ and an arc of~$\delta_\meet$.
It thus follows that the canonical complex of any quotient of the weak order is isomorphic to a subcomplex of the semi-crossing complex induced by arcs contained in an upper ideal of the subarc order.
We then provide explicit direct bijections between the semi-crossing arc bidiagrams and the weak order interval posets of G.~Ch\^atel, V.~Pilaud and V.~Pons~\cite{ChatelPilaudPons}, which are both in bijection with the intervals of the weak order.
Finally, we provide an algorithm to describe the Kreweras maps in any lattice quotient of the weak order in terms of semi-crossing arc bidiagrams, generalizing the classical Kreweras complement on non-crossing partitions.

%%%%%%%%%%%%%%%%%%%%%%%%%%%%%%%%%%%%%%

\section{The canonical complex of a finite semidistributive lattice}
\label{sec:canonicalComplex}

This section deals with canonical meet and join representations in a finite semidistributive lattice and its quotients.
We start with a recollection on join semidistributive lattices, their canonical join representations, their canonical join complexes, their Kreweras maps, and their lattice congruences (\cref{subsec:recollectionLattices}).
We then define the canonical complex of a semidistributive lattice~$L$ which encodes the intervals of~$L$ and contains both the canonical join complex and the canonical meet complex of~$L$ (\cref{subsec:canonicalComplex}).

%%%%%%%%%%%%%%%%

\subsection{Recollection on lattices}
\label{subsec:recollectionLattices}

We start by a quick recollection on semidistributive lattices, canonical representations, canonical complexes, Kreweras maps and lattice congruences.
All the material covered here is classical, we refer for instance to~\cite{FreeseNation, Reading-PosetRegionsChapter, Reading-arcDiagrams, Barnard}.
Following \mbox{\cite[Exm.~10]{Barnard}}, we illustrate this section with the case of distributive lattices.

\subsubsection{Join representations and semidistributive lattices}

Consider a finite lattice~$(L, \le, \join, \meet)$ where $\join$ is the join and $\meet$ is the meet.
We see $\join$ and $\meet$ as internal binary operators on~$L$ and try to factorize the elements of~$L$ in some canonical way.
It is first important to understand the irreducible elements for~$\join$ and~$\meet$.

\begin{definition}
An element~$x \in L$ is called \defn{join} (resp.~\defn{meet}) \defn{irreducible} if it covers (resp.~is covered by) a unique element denoted~$x_\star$ (resp.~$x^\star$).
We denote by $\JI$ (resp.~$\MI$) the subposet of~$L$ induced by the set of join (resp.~meet) irreducible elements of~$L$.
\end{definition}

\begin{definition}
A \defn{join representation} of~$x \in L$ is a subset~$J \subseteq L$ such that~$x = \bigJoin J$.
Such a representation is \defn{irredundant} if~$x \ne \bigJoin J'$ for any strict subset~$J' \subsetneq J$.
The irredundant join representations in~$L$ are antichains of~$L$, and are ordered by containement of the lower sets of their elements (\ie~$J \le J'$ if and only if for any~$y \in J$ there exists~$y' \in J'$ such that~$y \le y'$ in~$L$).
The \defn{canonical join representation} of~$x$, denoted~$\CJR(x)$, is the minimal irredundant join representation of~$x$ for this order, when it exists.
\end{definition}

Note that when it exists, $\CJR(x)$ is an antichain of~$\JI$.
The following statement characterizes the lattices where canonical join representations exist.

\begin{proposition}[{\cite[Thm.~2.24 \& Thm.~2.56]{FreeseNation}}]
\label{prop:semidistributive}
A finite lattice~$L$ is \defn{join semidistributive} when the following equivalent conditions hold:
\begin{enumerate}[(i)]
\item $x \join y = x \join z$ implies $x \join (y \meet z) = x \join y$ for any~$x, y, z \in L$,
\item for any cover relation~$x \lessdot y$ in~$L$, the set \[K_\join(x,y) \eqdef \set{z \in L}{z \not\le x \text{ but } z \le y} = \set{z \in L}{x \join z = y}\] has a unique minimal element~$k_\join(x,y)$ (which is then automatically join irreducible),
\item any element of~$L$ admits a canonical join representation.
\end{enumerate}
Moreover, the canonical join representation of~$y \in L$ is~$\CJR(y) = \set{k_\join(x, y)}{x \lessdot y}$.
\end{proposition}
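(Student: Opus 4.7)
The plan is to prove the three equivalences cyclically, $(iii) \Rightarrow (i) \Rightarrow (ii) \Rightarrow (iii)$, and then derive the explicit formula for $\CJR(y)$ once all three properties are at hand.

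For $(iii) \Rightarrow (i)$: Given $w = x \join y = x \join z$, the canonical join representation $C = \CJR(w)$ is minimal in the refinement order, hence refines every join representation of $w$. Applied to $\{x,y\}$ and $\{x,z\}$, each $c \in C$ satisfies $c \le x$ or ($c \le y$ and $c \le z$), so $c \le x \join (y \meet z)$. Summing yields $w = \bigJoin C \le x \join (y \meet z)$, and the reverse inequality is obvious.

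For $(i) \Rightarrow (ii)$: Fix a cover $x \lessdot y$. The two descriptions of $K_\join(x,y)$ coincide because $x \lessdot y$ forces $x \join z \in \{x, y\}$ for any $z \le y$. The set is nonempty (it contains $y$) and meet-closed by (i) applied to the equalities $x \join z_1 = x \join z_2 = y$. Being finite and meet-closed, it admits a minimum $k_\join(x,y)$. Join irreducibility follows from observing that a decomposition $k_\join(x,y) = a \join b$ with $a, b < k_\join(x,y)$ would place $a$ and $b$ outside $K_\join(x,y)$ by minimality, hence $a, b \le x$, contradicting $k_\join(x,y) \not\le x$.

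For $(ii) \Rightarrow (iii)$: Define $J(y) = \{k_\join(x,y) : x \lessdot y\}$ and let $A(y) \subseteq J(y)$ be the subset of maximal elements. Then $\bigJoin J(y) = y$: the inequality $\le$ is immediate, and strict inequality would yield a cover $x$ with $\bigJoin J(y) \le x \lessdot y$, contradicting $k_\join(x,y) \not\le x$. Moreover, $J(y)$ refines every join representation $J'$ of $y$: for each $k = k_\join(x,y) \in J(y)$, since $y = \bigJoin J' \not\le x$, some $j' \in J'$ lies in $K_\join(x,y)$, giving $k \le j'$. Consequently, $A(y)$ is an antichain with $\bigJoin A(y) = y$ that refines every irredundant join representation, so it is the canonical join representation of $y$.

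To establish the explicit formula $\CJR(y) = J(y)$, it remains to show that $J(y)$ is itself an antichain (so $A(y) = J(y)$), using (i), which is now available. Suppose $k_1 < k_2$ in $J(y)$ with $k_i = k_\join(x_i, y)$ and $x_1 \ne x_2$. Then $k_2 \not\le x_1$ (otherwise $k_1 \le k_2 \le x_1$), so $x_1 \join k_2 = y$ (using $x_1 \lessdot y$). Since $x_1 \ne x_2$ and both cover $y$, we have $x_1 \join x_2 = y = x_2 \join k_2$, and (i) yields $x_2 \join (x_1 \meet k_2) = y$. Let $k'$ denote the unique predecessor of the join irreducible $k_2$. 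Since $k_2 \not\le x_1$, we have $x_1 \meet k_2 < k_2$, so $x_1 \meet k_2 \le k'$, and thus $x_2 \join k' \ge y$. But $k' < k_2$ excludes $k'$ from $K_\join(x_2, y)$ by minimality of $k_2$, so $k' \le x_2$, giving $x_2 \join k' = x_2 < y$, a contradiction.

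The main obstacle is the antichain argument for the explicit formula, which combines the join irreducibility of $k_\join(x,y)$ with semidistributivity~(i); the three equivalences themselves reduce to routine manipulations of the refinement order and the meet-closure afforded by~(i).
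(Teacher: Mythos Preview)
The paper does not supply its own proof of this proposition: it is stated as a recollection and attributed to \cite[Thm.~2.24 \& Thm.~2.56]{FreeseNation}, with no argument given. There is therefore nothing in the paper to compare your proof against.

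Your cyclic proof $(iii)\Rightarrow(i)\Rightarrow(ii)\Rightarrow(iii)$ and the subsequent antichain argument are correct. One small point worth making explicit: in the step $(ii)\Rightarrow(iii)$ you assert that $A(y)$ is the canonical join representation because it is an antichain joining to $y$ and refines every irredundant join representation; strictly speaking you also need $A(y)$ to be \emph{irredundant}. This follows immediately from what you have: if $A'\subsetneq A(y)$ had $\bigJoin A'=y$, then $J(y)$ (hence $A(y)$) would refine $A'$, forcing any $a\in A(y)\smallsetminus A'$ to lie below some $a'\in A'\subseteq A(y)$, contradicting that $A(y)$ is an antichain. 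With that detail added, the argument is complete. (There is also a typographical slip: ``both cover $y$'' should read ``both are covered by $y$''.)
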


Note that in a finite join semidistributive lattice~$L$, we can associate to any meet irreducible element~$m$ of~$L$ a join irreducible element~$\kappa_\join(m) \eqdef k_\join(m, m^\star)$ of~$L$.
Moreover, the existence of canonical join representations enable us to consider the following complex, illustrated in \cref{fig:CJCs-CMCs}.
It was initially defined in \cite{Reading-arcDiagrams}, where a combinatorial model was provided for the weak order (see \cref{subsec:NCADs}), and studied in~\cite{Barnard} for arbitrary finite semidistributive lattices.

\begin{definition}
The \defn{canonical join complex}~$\CJC$ of a finite join semidistributive lattice~$L$ is the simplicial complex on~$\JI$ whose faces are the canonical join representations of the elements~of~$L$.
\end{definition}

\begin{figure}
	\centerline{\includegraphics{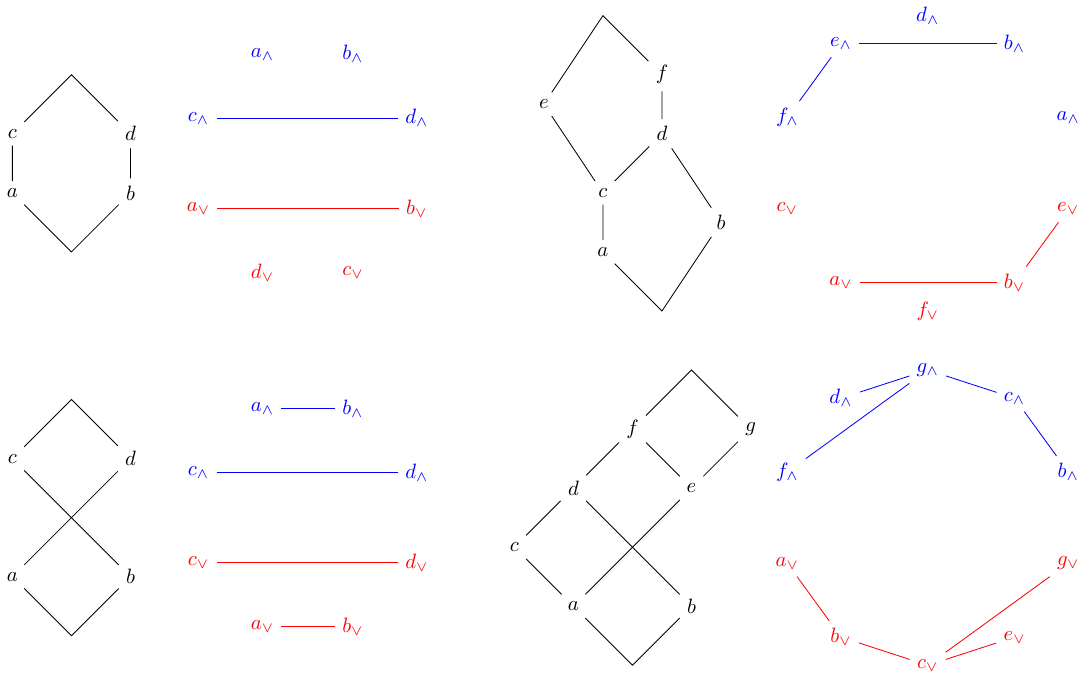}}
	\caption{Some semidistributive lattices and their canonical join (red) and meet (blue) complexes. The letters label all join or meet irreducible elements, and we denote by~$x_\join$ (resp.~$x_\meet$) the element~$x$ when it is considered as a join (resp.~meet) irreducible. Note that we always consistently color joinands in red and meetands in blue. The bottom two lattices are distributive while the top two are only semidistributive.}
	\label{fig:CJCs-CMCs}
\end{figure}

The \defn{meet semidistributivity}, the maps~$K_\meet$, $k_\meet$ and~$\kappa_\meet$, the \defn{canonical meet representation}~$\CMR(x)$ and the \defn{canonical meet complex}~$\CMC$ are all defined dually.
A lattice~$L$ is \defn{semidistributive} if it is both meet and join semidistributive.
In this case, the maps~$\kappa_\join$ and~$\kappa_\meet$ define inverse bijections between~$\MI$ and~$\JI$, and the complexes $\CJC$ and~$\CMC$ behave particularly nicely.

\pagebreak

\begin{proposition}[{\cite[Thm.~2 \& Coro.~5]{Barnard}}]
\label{prop:canonicalJoinComplex}
If~$L$ is a finite semidistributive lattice, then
\begin{enumerate}[(i)]
\item $\CJC$ and~$\CMC$ are flag simplicial complexes (\ie their minimal non-faces are edges, or equivalently they are the clique complexes of their graphs),
\item the maps~$\kappa_\join$ and~$\kappa_\meet$ induce inverse isomorphisms between $\CMC$ and~$\CJC$.
\end{enumerate}
\end{proposition}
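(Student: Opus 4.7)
The plan is to prove (i) first (flagness), then derive (ii) using \cref{prop:semidistributive} and the bijection between $\JI$ and $\MI$ induced by $\kappa_\join$ and $\kappa_\meet$.

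For (i), by symmetry I focus on $\CJC$. I would show: if $J \subseteq \JI$ has every pair $\{j, j'\} \subseteq J$ a canonical join representation of $j \join j'$, then $J = \CJR(\bigJoin J)$. Pairwise canonicity forces $J$ to be an antichain of $\JI$ (since $\{j, j'\}$ with $j < j'$ would not even be irredundant). Setting $x \eqdef \bigJoin J$ and $x_j \eqdef \bigJoin(J \setminus \{j\})$ for $j \in J$, the characterization from \cref{prop:semidistributive} reduces the goal to proving $x_j \lessdot x$ and $j = k_\join(x_j, x)$ for each $j \in J$. The canonicity of $\{j, j'\}$ gives $j = k_\join(j', j \join j')$ for every $j' \in J \setminus \{j\}$, so any $z \in L$ with $z \join j' \geq j \join j'$ must satisfy $z \geq j$. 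The key step is to iterate the axiom (i) of \cref{prop:semidistributive} across all $j' \in J \setminus \{j\}$ in a coordinated way to propagate this pairwise information and force any $z \in K_\join(x_j, x)$ to satisfy $z \geq j$. This yields both the minimality of $j$ in $K_\join(x_j, x)$ and, by join irreducibility of that minimum, the cover relation $x_j \lessdot x$; irredundance of $J$ drops out of the same analysis.

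For (ii), given $J = \CJR(x)$ for some $x \in L$, I would show that $M \eqdef \kappa_\meet(J) \subseteq \MI$ is the canonical meet representation of $\bigMeet M$. For each $j \in J$, \cref{prop:semidistributive} yields a lower cover $x_j \lessdot x$ with $j = k_\join(x_j, x)$, hence $\kappa_\meet(j) = k_\meet(x_j, x)$ by the definitions. Transporting this data to the dual side amounts to producing, for each $m = \kappa_\meet(j) \in M$, an upper cover $y \lessdot y^m$ of $y \eqdef \bigMeet M$ with $m = k_\meet(y, y^m)$, and then invoking the dual of \cref{prop:semidistributive}. Here the flag property from (i) (and its dual) is what ensures that pairwise compatibility transfers from the join side to the meet side through the bijection $\kappa_\meet$. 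The symmetric argument starting from a canonical meet representation provides the inverse map, completing the isomorphism between $\CMC$ and $\CJC$.

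The main obstacle is the flag step in (i). The hypothesis is purely local (about pairs), while the conclusion is global (about the whole set $J$). Bridging the gap requires applying the join semidistributivity axiom in an essential and iterative way, rather than through a naive induction on $|J|$: removing one element of $J$ can alter the lower-cover structure of $\bigJoin(J \setminus \{j\})$ and thereby prevents a clean inductive reduction. All other ingredients (antichain property, cover characterization, bijectivity of $\kappa_\join$ and $\kappa_\meet$) are available directly from \cref{prop:semidistributive} and its dual.
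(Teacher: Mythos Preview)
The paper does not supply a proof of this statement; it is quoted from~\cite{Barnard}. I therefore evaluate your outline on its own terms.

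There are two genuine gaps. In~(i), the entire content of the argument is the ``key step'' you explicitly leave open: passing from the pairwise data $j = k_\join(j', j \join j')$ for each $j' \in J \ssm \{j\}$ to the global statement that $j$ is the minimum of~$K_\join(x_j, x)$. Saying one must ``iterate the axiom in a coordinated way'' is a restatement of the goal, not a mechanism; nothing in your outline indicates how the iteration is organized or why it succeeds, and you yourself note that naive induction on~$|J|$ fails. This is precisely the substance of the flagness theorem, so as written the proposal for~(i) is a plan without a proof. In~(ii), the assertion ``hence $\kappa_\meet(j) = k_\meet(x_j, x)$ by the definitions'' is not correct: by definition $\kappa_\meet(j) = k_\meet(j_\star, j)$, computed at the cover~$j_\star \lessdot j$, whereas $k_\meet(x_j, x)$ is computed at the entirely different cover~$x_j \lessdot x$. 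That these two meet-labels agree is a genuine theorem about semidistributive lattices (covers perspective to $j_\star \lessdot j$ receive the same label), not a definitional unwinding. Even granting it, you still do not explain how $m = k_\meet(x_j, x)$ yields an upper cover of~$y = \bigMeet M$ with meet-label~$m$, since $x \ne y$ in general; the reduction to pairs via flagness is the right strategy, but the pairwise claim that $\{j, j'\} \in \CJC$ implies $\{\kappa_\meet(j), \kappa_\meet(j')\} \in \CMC$ is never actually argued.
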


In fact, it was proved in~\cite[Thm.~2]{Barnard} that~$\CJC$ is flag if and only if $L$ is semidistributive.
We will not use the ``only if'' direction in this paper.

\begin{example}[Distributive lattices]
\label{exm:kappaDistributiveLattice}
The name semidistributivity actually comes from the well understood class of distributive lattices.
A lattice~$L$ is \defn{distributive} if $x \join (y \meet z) = (x \join y) \meet (x \join z)$ for any~$x, y, z \in L$.
Note that the dual condition $x \meet (y \join z) = (x \meet y) \join (x \meet z)$ for any~$x, y, z \in L$ is actually equivalent to the primal one.
The fundamental theorem for distributive lattices affirms that $L$ is distributive if and only if it is isomorphic to the lattice of lower sets of its join irreducible poset~$P$.
In other words, any antichain of join irreducible elements in~$P$ forms a canonical join representation in~$L$.
To be more precise, consider, for an antichain~$A$ of~$P$, the two lower sets
\[
j_A \eqdef \set{x \in P}{x \le y \text{ for some } y \in A}
\qquad\text{and}\qquad
m^A \eqdef \set{x \in P}{x \not\ge y \text{ for all } y \in A}.
\]
Said differently, $A$ is the set of maximal elements of~$j_A$ and the set of minimal elements of~$P \ssm m^A$.
For~$y \in P$, we abbreviate~$j_{\{y\}}$ into~$j_y$ and~$m^{\{y\}}$ into~$m^y$.
Then 
\begin{itemize}
\item the join (resp.~meet) irreducibles of~$L$ are precisely the lower sets~$j_y$ (resp.~$m^y$) for~$y \in P$, 
\item the map~$\kappa_\join$ (resp.~$\kappa_\meet$) is given by $\kappa_\join(m^y) = j_y$ (resp.~$\kappa_\meet(j_y) = m^y$),
\item the canonical join representation of~$j_A$ and the canonical meet representation of~$m^A$ are
\[
\CJR(j_A) = \bigset{j_y}{y \in A}
\qquad\text{and}\qquad
\CMR(m^A) = \bigset{m^y}{y \in A}.
\]
\item the canonical join and meet complexes~$\CJC$ and~$\CMC$ are both (isomorphic to) the clique complex on the incomparability graph of~$P$.
\end{itemize}
See \cite[Exm.~10]{Barnard}.
\end{example}

\subsubsection{Kreweras maps}
\label{subsubsec:Kreweras}

In a semidistributive lattice~$L$, each element has both a canonical join representation and a canonical meet representation.
It is natural to consider the maps that exchange the canonical join representation with the canonical meet representation of the same element.

\begin{definition}
The \defn{Kreweras maps}~$\eta_\join : \CMC \to \CJC$ and~$\eta_\meet : \CJC \to \CMC$ are defined by
\[
\eta_\join(M) \eqdef \CJR \big( \bigMeet M \big)
\qquad\text{and}\qquad
\eta_\meet(J) \eqdef \CMR \big( \bigJoin J \big).
\]
\end{definition}

Note that some authors call Kreweras maps the compositions~$\eta_\join \circ \kappa_\meet : \CJC \to \CJC$ and $\eta_\meet \circ \kappa_\join : \CMC \to \CMC$, see for instance~\cite{Barnard}.
As will be discussed in \cref{exm:KrewerasComplement}, the Kreweras maps for the Tamari lattice are closely related to the classical Kreweras complement on non-crossing partitions.
For the moment, we recall that the Kreweras maps for the distributive lattices are related to rowmotion.

\begin{example}[Distributive lattices]
\label{exm:KrewerasDistributiveLattice}
With the notations of~\cref{exm:kappaDistributiveLattice}, for an antichain~$A$ in~$P$, we denote by $\row_\join(A)$ the set of maximal elements of~$m^A$ and by $\row_\meet(A)$ the set of minimal elements of~$P \ssm j_A$.
In other words, we have~$m^A = j_{\row_\join(A)}$ and~$j_A = m_{\row_\meet(A)}$.
Hence, by \cref{exm:kappaDistributiveLattice}, the Kreweras maps~$\eta_\join$ and~$\eta_\meet$ are given by
\[
\eta_\join(\set{m^y}{y \in A}) = \set{j_y}{y \in \row_\join(A)}
\qquad\text{and}\qquad
\eta_\meet(\set{j_y}{y \in A}) = \set{m^y}{y \in \row_\meet(A)}.
\]
See \cite[Rem.~32]{Barnard}.
\end{example}

\subsubsection{Lattice congruences}

We now discuss quotients of the lattice $L$, considered as an algebraic structure with two internal binary operators~$\join$ and~$\meet$.
We thus need equivalence relations on~$L$ that respects~$\join$ and~$\meet$.

\begin{definition}
A \defn{congruence}~$\equiv$ on~$L$ is an equivalence relation on~$L$ such that $x \equiv x'$ and $y \equiv y'$ implies $x \join y \equiv x' \join y'$ and $x \meet y \equiv x' \meet y'$.
Equivalently, the equivalence classes are intervals, and the maps~$\projDown[\equiv]$ and~$\projUp[\equiv]$ sending an element to the minimum and maximum elements in its congruence class are order preserving.
\end{definition}

\begin{definition}
The \defn{lattice quotient}~$L/{\equiv}$ is the lattice structure on the congruence classes, where for any two congruence classes~$X$ and~$Y$, 
\begin{itemize}
\item the order is given by~$X \le Y$ if and only if $x \le y$ for some representatives~$x \in X$ and~$y \in Y$,
\item the join~$X \join Y$ (resp.~meet~$X \meet Y$) is the congruence class of~$x \join y$ (resp.~$x \meet y$) for any representatives~$x \in X$ and~$y \in Y$.
\end{itemize}
\end{definition}

Note that the lattice quotient~$L/{\equiv}$ is isomorphic to the subposet of~$L$ induced by the minimal (or maximal) elements in their congruence classes.
This subposet is a join (resp.~meet) subsemilattice of~$L$ but may fail to be a sublattice of~$L$.
We now consider all congruences of~$L$.

\begin{definition}
The \defn{congruence lattice}~$\con(L)$ is the set of all congruences of~$L$ ordered by refinement.
\end{definition}

The congruence lattice~$\con(L)$ is a distributive lattice where the meet is the intersection of relations and the join is the transitive closure of union of relations.
For any join irreducible element~$j \in \JI$, we denote by~$\con(j)$ the unique minimal congruence of~$L$ that \defn{contracts}~$j$, that is with~$j_\star \equiv j$.
It turns out that~$\con(j)$ is join irreducible in~$\con(L)$ and that all join irreducible congruences in~$\con(L)$ are of this form.
Hence, any congruence of~$L$ is completely determined by the set of join irreducible elements of~$L$ that it contracts.
We denote by~$\UJI$ the set of join irreducible elements of~$L$ uncontracted by~$\equiv$.
Not all subsets of join irreducible elements of~$L$ are of the form~$\UJI$ for some congruence~$\equiv$ of~$L$.
The possible subsets are governed by the following relation.

\begin{definition}
For~$j,j' \in \JI$, we say that~$j$ \defn{forces}~$j'$, and write~$j \succcurlyeq j'$, if $\con(j) \ge \con(j')$, that is if any congruence contracting~$j$ also contracts~$j'$.
\end{definition}

The forcing relation is a preorder~$\preccurlyeq$ (\ie a transitive and reflexive, but not necessarily antisymmetric, relation) on $\JI$, whose upper sets correspond to the congruences of~$L$.

\begin{proposition}[{\cite[Prop.~9-5.16]{Reading-PosetRegionsChapter}}]
The following conditions are equivalent for~$J \subseteq \JI$:
\begin{itemize}
\item $J$ is an upper set of the forcing preorder (\ie $j \succcurlyeq j'$ and~$j \in J$ implies~$j' \in J$).
\item $J = \UJI$ for some congruence~$\equiv$ of~$L$.
\end{itemize}
\end{proposition}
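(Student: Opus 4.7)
I would prove the two implications separately, relying only on the two facts recalled just before the statement: that each~$\con(j)$ is join irreducible in~$\con(L)$, and that $\con(L)$ is distributive (so that join irreducibility upgrades to join primality).

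\textbf{Forward direction.} Suppose $J = \UJI$ for some congruence~$\equiv$. Consider $j, j' \in \JI$ with $j \succcurlyeq j'$, which unpacks to~$\con(j) \ge \con(j')$ in~$\con(L)$. By transitivity, $\equiv \ge \con(j)$ implies $\equiv \ge \con(j')$, that is, if~$\equiv$ contracts~$j$ then~$\equiv$ also contracts~$j'$. Passing to uncontracted irreducibles yields the required implication relating $j \in J$ and~$j' \in J$.

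\textbf{Backward direction.} Given an upper set~$J$ of the forcing preorder, the natural candidate congruence is
\[
\equiv \; \eqdef \; \bigJoin_{j' \in \JI \ssm J} \con(j'),
\]
the join being taken in~$\con(L)$. The inclusion $\JI \ssm J \subseteq \JI \ssm \UJI$ is immediate: every~$j \notin J$ appears as a joinand, and is therefore contracted by~$\equiv$. For the reverse inclusion, assume some $j$ satisfies $\con(j) \le {\equiv}$, and derive a contradiction with the assumption~$j \in J$. Here lies the only nontrivial step: since $\con(j)$ is join irreducible, hence join prime, in the distributive lattice~$\con(L)$, the inequality $\con(j) \le \bigJoin_{j' \notin J} \con(j')$ promotes to $\con(j) \le \con(j')$ for a single $j' \in \JI \ssm J$, i.e.\ $j' \succcurlyeq j$. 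The upper set property of~$J$ applied to this pair then forces~$j \notin J$, a contradiction.

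\textbf{Main obstacle.} The only delicate step is precisely the promotion from a join inequality to a pointwise inequality in the distributive lattice~$\con(L)$; once this is in hand, the rest is direct bookkeeping from the definitions. A secondary point requiring a little care is to track consistently which direction ``upper set of the forcing preorder'' refers to, as the natural conventions differ by an order reversal.
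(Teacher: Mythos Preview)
The paper does not give its own proof of this proposition---it is quoted from \cite[Prop.~9-5.16]{Reading-PosetRegionsChapter} without argument---so there is nothing to compare your attempt against. Your proof is correct and is the standard one: the forward direction is immediate transitivity in~$\con(L)$, and the backward direction builds the witness congruence as the join of the~$\con(j')$ over the complement of~$J$, using that join irreducibility coincides with join primality in the distributive lattice~$\con(L)$.

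One remark on the orientation issue you flag at the end: the parenthetical in the paper's statement (``$j \succcurlyeq j'$ and $j \in J$ implies $j' \in J$'') is in fact the closure condition satisfied by the set of \emph{contracted} join irreducibles, not by~$\UJI$; the condition that actually characterises~$J = \UJI$ is the opposite one, namely $j \succcurlyeq j'$ and $j' \in J$ implies $j \in J$. Your argument is written for this latter (correct) condition, as is visible from how you pass to contrapositives in both directions (in the forward direction you conclude $j' \in J \Rightarrow j \in J$, and in the backward direction you use $j' \notin J$ and $j' \succcurlyeq j$ to force $j \notin J$). So your instinct that the orientation needed care was well founded, but the slip is in the paper's parenthetical, not in your reasoning.
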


As already mentioned, the set~$\UJI$ characterizes~$\equiv$. 
It moreover enables to understand the elements of~$L$ which are minimal in their congruence classes and their canonical join and meet representations as follows.

\begin{proposition}[{\cite[Prop.~9-5.29]{Reading-PosetRegionsChapter}}]
\label{prop:canonicalJoinRepresentationsQuotient}
Let~$\equiv$ be a congruence of a finite join semidistributive lattice~$L$.
Then
\begin{itemize}
\item an element~$x \in L$ is minimal in its congruence class if and only if $\CJR(x) \subseteq \UJI$,
\item the quotient~$L/{\equiv}$ is join semidistributive and the canonical joinands of a congruence~class~$X$ in~$L/{\equiv}$ are the congruence classes of the canonical joinands of the minimal element~in~$X$.
\end{itemize}
\end{proposition}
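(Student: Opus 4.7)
The plan is to leverage the bridge between cover relations and join irreducibles in a join semidistributive lattice (\cref{prop:semidistributive}) together with the forcing interpretation of the join irreducible congruences $\con(j)$ recalled just before the statement.

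For the first bullet, I would start from the formula $\CJR(x) = \set{k_\join(x_\star, x)}{x_\star \lessdot x}$ and observe that $x$ is minimal in its class if and only if no lower cover $x_\star \lessdot x$ satisfies $x_\star \equiv x$. The crux is the classical equivalence between contracting the cover $x_\star \lessdot x$ and contracting the join irreducible $j \eqdef k_\join(x_\star, x)$: on the one hand, $j_\star \equiv j$ gives $x_\star = x_\star \join j_\star \equiv x_\star \join j = x$; on the other hand, $x_\star \equiv x$ yields $j \equiv j \meet x_\star$, and since $j \not\le x_\star$ by definition of $k_\join$ combined with join irreducibility of $j$ forces $j \meet x_\star \le j_\star$, we obtain $j \equiv j_\star$. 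Hence $x$ is minimal if and only if every $k_\join(x_\star, x) \in \CJR(x)$ is uncontracted, that is $\CJR(x) \subseteq \UJI$.

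For the second bullet, fix a congruence class $X$ with minimum $x$ and set $J \eqdef \CJR(x) \subseteq \UJI$. I would prove that $\set{[j]}{j \in J}$ is the canonical join representation of $X$ in $L/{\equiv}$; applied to every class, this both identifies the canonical joinands and, via characterization (iii) of \cref{prop:semidistributive}, establishes join semidistributivity of $L/{\equiv}$. A preliminary observation I would record is that every $j \in \UJI$ is the minimum of its congruence class, for otherwise a lower cover of $j$ would lie in $[j]$, which by uniqueness of $j_\star$ would force $j_\star \equiv j$, contradicting $j \in \UJI$. Consequently the elements of $J$ remain pairwise distinct and incomparable modulo $\equiv$, so $\set{[j]}{j \in J}$ is an antichain of $L/{\equiv}$. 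It is a join representation of $X$ because $\bigJoin\set{[j]}{j \in J} = \big[\bigJoin J\big] = [x] = X$. For irredundancy, any $J' \subsetneq J$ with $\bigJoin_{j \in J'}[j] = X$ would give $\bigJoin J' \le x$ and $\bigJoin J' \equiv x$, hence $\bigJoin J' = x$ by minimality of $x$, contradicting the irredundancy of $J$ in $L$. For canonicity, identifying $L/{\equiv}$ with its embedding as the induced subposet on minimum representatives, any other irredundant join representation $\{[y_1], \ldots, [y_k]\}$ of $X$ with each $y_i = \min[y_i]$ satisfies $y_i \le x$, so $\bigJoin y_i \le x$ and $\bigJoin y_i \equiv x$ force $\bigJoin y_i = x$; the minimality of $J$ among irredundant representations of $x$ in $L$ then produces, for each $j \in J$, an index $i$ with $j \le y_i$, whence $[j] \le [y_i]$ in $L/{\equiv}$.

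The main obstacle I anticipate is the careful bookkeeping in the second bullet: one must consistently perform joins and comparisons in $L$ before translating them to the quotient, and the whole argument hinges on the preliminary observation that uncontracted join irreducibles are already the minima of their classes. Once this is in hand, the usual identification of $L/{\equiv}$ with the induced subposet on minimum representatives reduces the transfer of irredundancy and canonicity between $L$ and $L/{\equiv}$ to a straightforward diagram chase.
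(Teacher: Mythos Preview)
The paper does not give its own proof of this proposition: it is quoted verbatim from \cite[Prop.~9-5.29]{Reading-PosetRegionsChapter} and immediately followed by the next statement. So there is nothing to compare against here.

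That said, your argument is correct and self-contained. For the first bullet, the two implications you use --- that $j_\star \le x_\star$ (by minimality of $j = k_\join(x_\star,x)$ in $K_\join(x_\star,x)$) and that $j \meet x_\star \le j_\star$ (by join irreducibility of $j$ combined with $j \not\le x_\star$) --- are exactly the standard ingredients, and the interval property of congruence classes closes the loop. For the second bullet, your preliminary observation that every $j \in \UJI$ is already the minimum of its class is the key point; once you have it, the identification of $L/{\equiv}$ with the subposet of minima lets you transport irredundancy and minimality from $L$ to the quotient essentially verbatim. The only step worth making slightly more explicit is the passage from ``$\{y_1,\dots,y_k\}$ is a (possibly redundant) join representation of $x$ in $L$'' to ``each $j \in J$ lies below some $y_i$'': this holds because one can prune to an irredundant subrepresentation and then invoke the minimality of $\CJR(x)$, but it is good to say so.
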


\cref{prop:canonicalJoinRepresentationsQuotient} translates as follows to the canonical join complex~$\CJC$.

\begin{proposition}
\label{prop:canonicalJoinComplexQuotient}
Let~$\equiv$ be a congruence on a finite join semidistributive lattice~$L$.
Then the canonical join complex~$\CJC[L/{\equiv}]$ of the quotient~$L/{\equiv}$ is isomorphic to the subcomplex~$\CJC[\equiv]$ of the canonical join complex~$\CJC$ of~$L$ induced by~$\UJI$.
\end{proposition}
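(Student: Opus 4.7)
The plan is to derive this as a direct consequence of \cref{prop:canonicalJoinRepresentationsQuotient}, which describes both the elements minimal in their congruence classes and the canonical joinands after quotienting. Throughout, I will identify the quotient $L/{\equiv}$ with the subposet of $L$ induced by the minima of the congruence classes (the image of $\projDown[\equiv]$), so that an element $x \in L/{\equiv}$ satisfies $\CJR_L(x) \subseteq \UJI$.

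First I would construct the isomorphism on vertices. By \cref{prop:canonicalJoinRepresentationsQuotient}, for any $x \in L/{\equiv}$ its canonical joinands in~$L/{\equiv}$ are the congruence classes of its canonical joinands in~$L$, \ie\ we have the equality $\CJR_{L/{\equiv}}([x]_\equiv) = \set{[j]_\equiv}{j \in \CJR_L(x)}$. Applied to $x = j \in \UJI$, whose canonical join representation in~$L$ is~$\{j\}$ itself, this yields that $[j]_\equiv$ is join irreducible in~$L/{\equiv}$ with $\CJR_{L/{\equiv}}([j]_\equiv) = \{[j]_\equiv\}$. Conversely, every join irreducible of $L/{\equiv}$ is obtained this way: if~$y$ is join irreducible in~$L/{\equiv}$, then its singleton canonical join representation in~$L/{\equiv}$ is $\{[j]_\equiv\}$ for some $j \in \CJR_L(y) \subseteq \UJI$, and the map~$j \mapsto [j]_\equiv$ from $\UJI$ to $\JI[L/{\equiv}]$ is injective since distinct elements of~$\UJI$ sit in distinct congruence classes (each being minimal in its own class). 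This gives a bijection $\UJI \to \JI[L/{\equiv}]$.

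Next I would transfer the simplicial structure along this bijection. A face of~$\CJC[\equiv]$ is, by definition, a subset $J \subseteq \UJI$ which is a canonical join representation in~$L$, \ie\ $J = \CJR_L(x)$ for some $x \in L$; but $\CJR_L(x) \subseteq \UJI$ forces $x$ to be minimal in its class by \cref{prop:canonicalJoinRepresentationsQuotient}, so $x \in L/{\equiv}$. Applying the same proposition gives $\CJR_{L/{\equiv}}([x]_\equiv) = \set{[j]_\equiv}{j \in J}$, which is the image of $J$ under the vertex bijection and is thus a face of~$\CJC[L/{\equiv}]$. Conversely, any face of~$\CJC[L/{\equiv}]$ is of the form $\CJR_{L/{\equiv}}([x]_\equiv) = \set{[j]_\equiv}{j \in \CJR_L(x)}$ for some $x$ minimal in its class, and $\CJR_L(x) \subseteq \UJI$ is then a face of~$\CJC[\equiv]$ mapping onto it. Hence the vertex bijection extends to a simplicial isomorphism $\CJC[\equiv] \cong \CJC[L/{\equiv}]$.

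The only step that requires some care, rather than being a pure application of \cref{prop:canonicalJoinRepresentationsQuotient}, is injectivity of the vertex map $j \mapsto [j]_\equiv$ and the fact that every join irreducible of~$L/{\equiv}$ lies in its image; both follow cleanly from the observation that elements of~$\UJI$ are minima of their classes (since they admit a canonical join representation contained in~$\UJI$, namely themselves), so their classes are pairwise distinct, and from the singleton computation of $\CJR_L$ on a join irreducible. No further obstacle is expected.
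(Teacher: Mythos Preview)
Your proposal is correct and follows exactly the approach the paper intends: the paper states \cref{prop:canonicalJoinComplexQuotient} without proof, introducing it as the direct translation of \cref{prop:canonicalJoinRepresentationsQuotient} to the canonical join complex. You have simply spelled out that translation in full detail, including the vertex bijection $\UJI \to \JI[L/{\equiv}]$ and the face correspondence, so there is nothing to add or correct.
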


We will need the following statement relating the canonical join representation of an element~$x$ of~$L$ with the canonical join representation of the minimal element~$\projDown[\equiv](x)$ in its equivalence class.
We provide a proof here as we have not found this statement explicitly in the literature.

\begin{proposition}
\label{prop:canonicalJoinRepresentationProjection}
Let~$\equiv$ be a congruence of a finite join semidistributive lattice~$L$.
For any~$x \in L$, the lower ideal of~$L$ generated by~$\CJR(x)$ contains~$\CJR \big( \projDown[\equiv](x) \big)$.
\end{proposition}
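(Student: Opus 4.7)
The plan is to reduce the claim to two auxiliary facts and then combine them. Let me set notation: write $y \eqdef \projDown[\equiv](x)$ and $\CJR(x) = \{j_1, \dots, j_k\}$. I need to show that for every $i \in \CJR(y)$ there exists some $t$ with $i \le j_t$.

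First I would verify that $\projDown[\equiv]$ is a join homomorphism: $\projDown[\equiv](u \join v) = \projDown[\equiv](u) \join \projDown[\equiv](v)$ for all $u, v \in L$. One inequality comes for free from monotonicity of $\projDown[\equiv]$ applied to $u \le u \join v$ and $v \le u \join v$. For the reverse, $\projDown[\equiv](u) \join \projDown[\equiv](v)$ is congruent to $u \join v$ by the defining property of $\equiv$, and lies below $u \join v$, so it sits above the minimum $\projDown[\equiv](u \join v)$ of its class. Iterating, I get
\[
y \;=\; \projDown[\equiv]\bigl(j_1 \join \cdots \join j_k\bigr) \;=\; \projDown[\equiv](j_1) \join \cdots \join \projDown[\equiv](j_k),
\]
so $y$ is expressed as the join of the set $S \eqdef \{\projDown[\equiv](j_t)\}_{t}$, where each $\projDown[\equiv](j_t) \le j_t$.

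Second, I would establish the general lemma: in any finite join semidistributive lattice $L$, if $y = \bigJoin S$ then every element of $\CJR(y)$ lies below some element of $S$. By \cref{prop:semidistributive}, any $i \in \CJR(y)$ has the form $i = k_\join(z, y)$ for some cover $z \lessdot y$, and $i$ is the minimum of $K_\join(z,y) = \set{w \in L}{w \not\le z \text{ but } w \le y}$. If every $s \in S$ satisfied $s \le z$ then $y = \bigJoin S \le z$, contradicting $z \lessdot y$; hence there exists $s \in S$ with $s \in K_\join(z,y)$, and by minimality $i \le s$.

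Finally, I combine: for $i \in \CJR(y)$ the lemma applied to $y = \bigJoin S$ yields some $t$ with $i \le \projDown[\equiv](j_t)$, and since $\projDown[\equiv](j_t) \le j_t \in \CJR(x)$, the element $i$ lies in the lower ideal of $L$ generated by $\CJR(x)$. I do not anticipate any serious obstacle: the only delicate point is the first step, and there one must be slightly careful not to confuse ``minimum of the congruence class of $u \join v$'' with ``the join of the minima of the classes of $u$ and $v$'' before proving they coincide. Everything else is bookkeeping.
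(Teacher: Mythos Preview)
Your argument is correct and takes a genuinely different route from the paper's. The paper proceeds by induction on the size of the interval~$[\projDown[\equiv](x),x]$: as long as~$\projDown[\equiv](x) \ne x$, it picks a contracted joinand~$j \in \CJR(x)$, replaces~$j$ by~$j_\star$ in the join to obtain some~$y \equiv x$ with~$y < x$, and observes that~$\CJR(y)$ still lies in the lower ideal generated by~$\CJR(x)$ (via the intermediate join representation~$(\CJR(x) \ssm \{j\}) \cup \CJR(j_\star)$). Your proof instead isolates two clean and reusable facts: that~$\projDown[\equiv]$ is a join homomorphism, and that in a join semidistributive lattice any join representation of~$y$ refines~$\CJR(y)$ in the lower-ideal order (which you derive directly from the description of~$\CJR(y)$ via the~$k_\join$ maps in \cref{prop:semidistributive}, though it also follows straight from the minimality in the definition of~$\CJR$). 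Your approach is shorter and more conceptual, and avoids induction entirely; the paper's approach, on the other hand, makes explicit how to walk from~$x$ down to~$\projDown[\equiv](x)$ one contracted joinand at a time, which may be informative in contexts where one wants that step-by-step description.
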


\begin{proof}
The proof works by induction on the size of the interval~$[\projDown[\equiv](x), x]$. 
The statement is immediate if~$\projDown[\equiv](x) = x$.
Otherwise, there exists~$j \in \CJR(x) \ssm \UJI$.
Let~$y \eqdef \bigJoin \big( \CJR(x) \symdif \{j, j_\star\} \big)$, where~$\symdif$ denotes the symmetric difference.
Since~$j \equiv j_\star$, we have~$x \equiv y$ and thus~$\projDown[\equiv](x) = \projDown[\equiv](y)$.
Since~$y$ has a join representation strictly contained in the lower ideal of~$L$ generated by~$\CJR(x)$, we have~$y < x$ so that~$[\projDown[\equiv](y), y]$ is strictly contained in~$[\projDown[\equiv](x), x]$.
By induction hypothesis, we thus obtain that~$\CJR \big( \projDown[\equiv](y) \big)$ is contained in the lower ideal of~$L$ generated by~$\CJR(y)$.
Moreover, observe that~$J \eqdef \big( \CJR(x) \ssm \{j\} \big) \cup \CJR(j_\star)$ is a join representation for~$y$, so that~$\CJR(y)$ is contained in the lower ideal of~$L$ generated by~$J$, which is itself contained in the lower ideal of~$L$ generated by~$\CJR(x)$ (since any element of~$\CJR(j_\star)$ is lower than~$j_\star$ and thus than~$j$).
We conclude that~$\CJR \big( \projDown[\equiv](x) \big)$ is indeed in the lower ideal of~$L$ generated by~$\CJR(x)$.
\end{proof}

Note that this property is quite specific to~$\projDown[\equiv](x)$.
Namely, a relation $x \le y$ does not imply any inclusion between the lower ideals of~$L$ generated by~$\CJR(x)$ and~$\CJR(y)$ in general (see \eg \cref{fig:CJCs-CMCs}).
\cref{prop:canonicalJoinRepresentationProjection} ensures that we can look for~$\CJR \big( \projDown[\equiv](x) \big)$ among the antichains of join irreducible elements of~$L$ uncontracted by~$\equiv$ and below a join irreducible element of~$\CJR(x)$.
Unfortunately, it remains difficult in general to describe~$\CJR \big( \projDown[\equiv](x) \big)$ because not all such antichains define a canonical join representation~of~$L$.

Dual statements hold using meets instead of joins, and we denote by~$\UMI$ the meet irreducible elements of~$L$ uncontracted by~$\equiv$, and by~$\CMC[\equiv]$ the subcomplex of~$\CMC$ induced by~$\UMI$ for a congruence~$\equiv$ on~$L$.
Due to \cref{prop:canonicalJoinComplexQuotient}, we will always work with the subcomplexes~$\CJC[\equiv]$ and~$\CMC[\equiv]$ rather than with the complexes~$\CJC[L/{\equiv}]$ and~$\CMC[L/{\equiv}]$.

When the lattice~$L$ is semidistributive, the two sets~$\UJI$ and~$\UMI$ and the two subcomplexes~$\CJC[\equiv]$ and~$\CMC[\equiv]$ are connected by the maps~$\kappa_\join$ and~$\kappa_\meet$.
We provide a proof here as we have not found this statement explicitly in the literature.

\begin{proposition}
\label{prop:uncontractedElements}
Let~$\equiv$ be a congruence on a finite semidistributive lattice~$L$.
Then we have ${\UJI = \kappa_\join \big( \UMI \big)}$ and~$\UMI = \kappa_\meet \big( \UJI \big)$.
Hence, the maps~$\kappa_\join$ and~$\kappa_\meet$ induce inverse isomorphisms between the subcomplexes~$\CMC[\equiv]$ and~$\CJC[\equiv]$.
\end{proposition}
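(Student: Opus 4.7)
The plan is to reduce the proposition to a quadrilateral relation between a join irreducible and its image under $\kappa_\meet$, which directly translates contracting one cover relation into contracting the other.

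First, I would establish the following quadrilateral: for any $j \in \JI$, setting $m \eqdef \kappa_\meet(j) \in \MI$, we have
\[
m \meet j = j_\star
\qquad\text{and}\qquad
m \join j = m^\star.
\]
Indeed, by definition $m$ is the maximum element of $L$ satisfying $m \ge j_\star$ and $m \not\ge j$. Since $m^\star$ is the unique element covering $m$, maximality of $m$ forces $m^\star \ge j$, and then $m \lessdot m^\star \le m \join j$ with $m \join j > m$ gives $m \join j = m^\star$. Dually, $m \meet j \ge j_\star$ because $m \ge j_\star$, and $m \meet j < j$ because $m \not\ge j$; join irreducibility of $j$ then forces $m \meet j = j_\star$.

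Next, I would use this quadrilateral to prove the equivalence $j \equiv j_\star \iff m \equiv m^\star$ for any congruence $\equiv$ on $L$. If $j \equiv j_\star$, then joining both sides with $m$ yields $m^\star = m \join j \equiv m \join j_\star = m$, so $m$ is contracted. Conversely, if $m \equiv m^\star$, then meeting with $j$ gives $j_\star = m \meet j \equiv m^\star \meet j = j$ (the last equality holds because $m^\star \ge j$), so $j$ is contracted. This shows precisely that $j \in \UJI$ if and only if $\kappa_\meet(j) \in \UMI$, so the identities $\UMI = \kappa_\meet(\UJI)$ and $\UJI = \kappa_\join(\UMI)$ follow from the fact that $\kappa_\join$ and $\kappa_\meet$ are inverse bijections between $\JI$ and $\MI$ (\cref{prop:canonicalJoinComplex}).

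Finally, since \cref{prop:canonicalJoinComplex} already gives that $\kappa_\join$ and $\kappa_\meet$ induce inverse simplicial isomorphisms between $\CMC$ and $\CJC$, and since $\CJC[\equiv]$ (resp.\ $\CMC[\equiv]$) is defined as the full subcomplex of $\CJC$ (resp.\ $\CMC$) induced on $\UJI$ (resp.\ $\UMI$), the vertex-set bijections $\kappa_\join\colon \UMI \to \UJI$ and $\kappa_\meet\colon \UJI \to \UMI$ extend immediately to inverse isomorphisms of the induced subcomplexes, completing the proof. No genuine obstacle is expected; the only substantive step is verifying the quadrilateral identity, and even that is a short manipulation using the definition of $\kappa_\meet$ and the irreducibility of $j$ and $m$.
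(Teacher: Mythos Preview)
Your proof is correct and follows essentially the same approach as the paper: both arguments hinge on the identity $m \join j = m^\star$ (together with $m \join j_\star = m$, equivalently $m \meet j = j_\star$) and then use compatibility of~$\equiv$ with the lattice operations to transfer contraction of the edge $j_\star \lessdot j$ to contraction of $m \lessdot m^\star$. The only cosmetic difference is that you prove both directions of the biconditional directly via the meet and join identities, whereas the paper proves one implication and obtains the other by duality; the content is the same.
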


\begin{proof}
Let~$m \in \MI$ and~$j = \kappa_\join(m)$.
By definition, we have~$m \join j = m^\star$ and~$m \join j_\star = m$.
Hence, $j \equiv j_\star$ implies that~$m = m \join j_\star \equiv m \join j = m^\star$.
In other words, $\kappa_\join \big( \UMI \big) \subseteq \UJI$.
By symmetry, we have~$\kappa_\meet \big( \UJI \big) \subseteq \UMI$.
Since~$\kappa_\join$ and~$\kappa_\meet$ are reversed bijections, this yields equalities.
The last sentence of the statement thus follows from \cref{prop:canonicalJoinComplex}\,(ii).
\end{proof}

\begin{example}[Distributive lattices]
\label{exm:congruencesDistributiveLattice}
In a distributive lattice~$L$, there is no forcing at all.
Hence, any subset of join irreducible elements of~$L$ defines a congruence of~$L$.
In other words, with the notations of~\cref{exm:kappaDistributiveLattice}, any subset~$Y$ of~$P$ defines a congruence~$\equiv_Y$ with
\[
\UJI[\equiv_Y] = \set{j_y}{y \in Y}
\quad\text{and}\quad
\UMI[\equiv_Y] = \set{m^y}{y \in Y}.
\]
The lattice quotient~$L/{\equiv}$ is again distributive and isomorphic to the lattice of lower ideals of the restriction of the poset~$P$ to~$Y$.
\end{example}

%%%%%%%%%%%%%%%%

\subsection{The canonical complex}
\label{subsec:canonicalComplex}

We now define another complex that connects the canonical join complex~$\CJC$ to the canonical meet complex~$\CMC$ using intervals of~$L$.
This complex is illustrated in \cref{fig:CCs}.

\begin{definition}
\label{def:canonicalComplex}
The \defn{canonical complex}~$\CC$ of a finite semidistributive lattice~$L$ is the simplicial complex whose
\begin{itemize}
\item ground set is the \textbf{disjoint} union~$\JI \sqcup \MI$ of the sets of join irreducible and of meet irreducible elements of~$L$, and
\item faces are the \textbf{disjoint} unions~$J \sqcup M$ where~$J \in \CJC$ is a canonical join representation, $M \in \CMC$ is a canonical meet representation, and~$\bigJoin J \le \bigMeet M$.
\end{itemize}
\end{definition}

\begin{figure}
	\centerline{\includegraphics{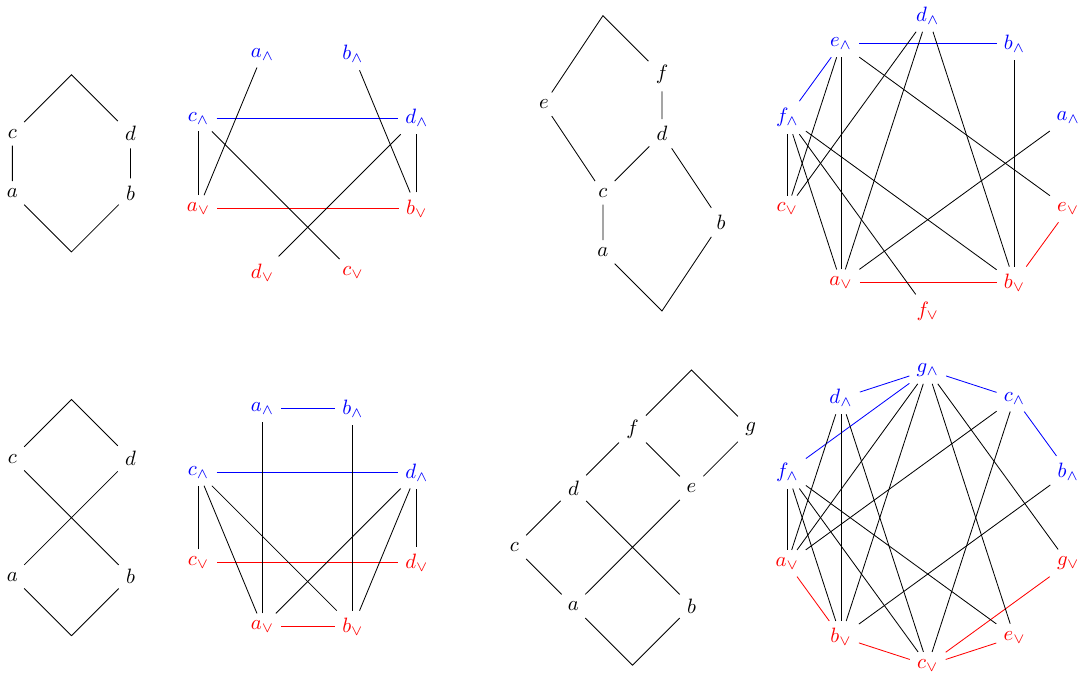}}
	\caption{The canonical complexes of the semidistributive lattices of \cref{fig:CJCs-CMCs}. The corresponding join (resp.~meet) canonical complexes of \cref{fig:CJCs-CMCs} are highlighted in red (resp.~blue). Since the canonical complexes are flag by \cref{prop:canonicalJoinComplex,prop:canonicalComplexFlag}, it is sufficient to represent their graphs. The letters label all join or meet irreducible elements, and we denote by~$x_\join$ (resp.~$x_\meet$) the element~$x$ when it is considered as a join (resp.~meet) irreducible. The vertices of the canonical complex are positioned so that the map $\kappa$ of \cref{rem:symmetryKappa} acts by central symmetry.}
	\label{fig:CCs}
\end{figure}

To avoid any confusion, let us insist here that when an element~$x \in L$ is both join irreducible and meet irreducible, then it appears twice in~$\CC$ and may appear twice in some faces of~$\CC$, once as a join irreducible element and once as a meet irreducible element.
We will thus always write the faces of~$\CC$ explicitly as disjoint unions.
In our pictures, we denote by~$x_\join$ (resp.~$x_\meet$) and color red (resp.~blue) the element~$x \in L$ considered as a join (resp.~meet) irreducible.

Our first observation is that, while the canonical join and meet complexes~$\CJC$ and~$\CMC$ encode the individual elements of~$L$, the canonical complex~$\CC$ encodes the intervals of~$L$.

\begin{definition}
The \defn{canonical representation} of an interval~$[x,y]$ of a semidistributive lattice~$L$ is the \textbf{disjoint} union~$\CJR(x) \sqcup \CMR(y)$.
\end{definition}

\begin{proposition}
\label{prop:intervals}
For a finite semidistributive lattice~$L$, the faces of the canonical complex~$\CC$ are precisely the canonical representations of the intervals of~$L$.
\end{proposition}

\begin{proof}
An interval~$[x,y]$ of~$L$ corresponds to a face~$\CJR(x) \sqcup \CMR(y)$ of~$\CC$ since~$\CJR(x) \in \CJC$, $\CMR(y) \in \CMC$ and~$\bigJoin \CJR(x) = x \le y =  \bigMeet \CMR(y)$.
Conversely, a face~$J \sqcup M$ of~$\CC$ corresponds to the interval~$[\bigJoin J, \bigMeet M]$ of~$L$.
\end{proof}

We next observe that~$\CC$ contains both~$\CJC$ and~$\CMC$ as induced subcomplexes.

\begin{proposition}
\label{prop:canonicalJoinMeetComplexesSubcomplexes}
For a finite semidistributive lattice~$L$, the canonical join (resp.~meet) complex $\CJC$ (resp.~$\CMC$) is the subcomplex of the canonical complex~$\CC$ induced by the join (resp.~meet) irreducible elements~$\JI$ (resp.~$\MI$).
\end{proposition}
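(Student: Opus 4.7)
The plan is to unfold Definition \ref{def:canonicalComplex} and verify both inclusions directly. The core observation I would flag first is that the empty set sits as a face in both $\CJC$ and $\CMC$: by (the dual of) Proposition \ref{prop:semidistributive}, the bottom element $\hat{0}$ admits the canonical join representation $\emptyset$ and the top element $\hat{1}$ admits the canonical meet representation $\emptyset$, so $\emptyset \in \CJC$ and $\emptyset \in \CMC$.

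For the forward inclusion, I would take a face $J \in \CJC$ and pair it with $M \eqdef \emptyset \in \CMC$. The condition $\bigJoin J \le \bigMeet M = \hat{1}$ required by Definition \ref{def:canonicalComplex} holds trivially, so $J = J \sqcup \emptyset$ is a face of $\CC$ whose vertices all live in the copy of $\JI$ inside the ground set of $\CC$.

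For the reverse inclusion, I would take any face $F$ of $\CC$ contained in $\JI$ and use Definition \ref{def:canonicalComplex} to write $F = J \sqcup M$ with $J \in \CJC$ and $M \in \CMC$. Because the ground set of $\CC$ is the \emph{disjoint} union $\JI \sqcup \MI$, the assumption $F \subseteq \JI$ forces $M = \emptyset$, hence $F = J \in \CJC$. The statement for $\CMC$ follows by the symmetric argument, exchanging joins with meets and $\hat{1}$ with $\hat{0}$.

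I do not anticipate any real obstacle. The one point worth making explicit is that, by the convention recalled in the paragraph following Definition \ref{def:canonicalComplex}, a lattice element which is simultaneously join and meet irreducible contributes two \emph{distinct} vertices to $\CC$; this is exactly what ensures that the decomposition $F = J \sqcup M$ of a face is unambiguous and can be recovered by reading off which copy of the ground set each vertex lives in.
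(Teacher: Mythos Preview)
Your argument is correct and is exactly the approach of the paper: the paper's proof is the single line ``$J \in \CJC \iff J \sqcup \varnothing \in \CC$ and $M \in \CMC \iff \varnothing \sqcup M \in \CC$'', which is precisely the equivalence you unpack in your forward and reverse inclusions. Your additional remarks on why $\varnothing$ lies in $\CJC$ and $\CMC$ and on the disjoint-union convention are reasonable clarifications but are not needed for the formal argument.
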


\begin{proof}
We have $J \in \CJC \iff J \sqcup \varnothing \in \CC$ and $M \in \CMC \iff \varnothing \sqcup M \in \CC$.
\end{proof}

Our next statement is the analogue of \cref{prop:canonicalJoinComplex}\,(i).

\begin{proposition}
\label{prop:canonicalComplexFlag}
For a finite semidistributive lattice~$L$, the canonical complex~$\CC$ is a flag simplicial complex.
\end{proposition}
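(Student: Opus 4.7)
The plan is to show that any minimal non-face of $\CC$ has size two, which is the standard characterization of flagness. So I take a subset $J \sqcup M \subseteq \JI \sqcup \MI$ all of whose pairs form edges of $\CC$, and I want to deduce that $J \sqcup M$ is itself a face.

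First I unpack what pairwise compatibility means by enumerating the three kinds of edges. For $j, j' \in J$, the edge $\{j, j'\} = \{j, j'\} \sqcup \varnothing$ belongs to $\CC$ exactly when $\{j,j'\} \in \CJC$. Dually for $m, m' \in M$. For the mixed case, the edge $\{j\} \sqcup \{m\}$ lies in $\CC$ iff $\{j\} \in \CJC$ (automatic), $\{m\} \in \CMC$ (automatic), and $\bigJoin \{j\} = j \le m = \bigMeet \{m\}$.

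Then I invoke \cref{prop:canonicalJoinComplex}\,(i): since $\CJC$ is flag, pairwise compatibility inside $J$ upgrades to $J \in \CJC$; similarly $M \in \CMC$. The only remaining condition for $J \sqcup M$ to be a face of $\CC$ is $\bigJoin J \le \bigMeet M$. But the pairwise mixed compatibility gives $j \le m$ for every $j \in J$ and every $m \in M$. Fixing $m \in M$ and taking the join over $j \in J$ yields $\bigJoin J \le m$; then taking the meet over $m \in M$ yields $\bigJoin J \le \bigMeet M$, as desired.

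I do not expect any real obstacle here: once the flagness of $\CJC$ and $\CMC$ is in hand, the whole argument reduces to the trivial lattice inequality that $j \le m$ for all $j \in J$, $m \in M$ implies $\bigJoin J \le \bigMeet M$. The only minor care is to remember that a vertex $x$ of $L$ that happens to be both join and meet irreducible contributes two distinct vertices $x_\join$ and $x_\meet$ to $\CC$, but the edge $\{x_\join, x_\meet\}$ is a face since $x \le x$, so nothing pathological occurs.
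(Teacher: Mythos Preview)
Your argument is correct and matches the paper's proof essentially verbatim: both use \cref{prop:canonicalJoinComplex}\,(i) to upgrade pairwise compatibility inside $J$ and inside $M$ to $J \in \CJC$ and $M \in \CMC$, and then observe that $j \le m$ for all $j \in J$, $m \in M$ gives $\bigJoin J \le \bigMeet M$. The only addition in the paper is a short preliminary paragraph checking explicitly that $\CC$ is closed under taking subsets (hence really a simplicial complex), which you implicitly take for granted.
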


\begin{proof}
Consider~$J \subseteq J'$ and~$M \subseteq M'$ such that~$J' \sqcup M' \in \CC$.
Then~$J \in \CJC$ since $J' \in \CJC$ and~$M \in \CMC$ since~$M' \in \CMC$ (since~$\CJC$ and~$\CMC$ are simplicial complexes), and~$\bigJoin J \le \bigJoin J' \le \bigMeet M' \le \bigMeet M$.
Hence $J \sqcup M \in \CC$ so that~$\CC$ is indeed a simplicial complex.

Consider~$J \subseteq \JI$ and~$M \subseteq \MI$ such that any two elements of~$J \sqcup M$ form a face of~$\CC$.
Then~$J \in \CJC$ and~$M \in \CMC$ (since~$\CJC$ and~$\CMC$ are flag by \cref{prop:canonicalJoinComplex}\,(i)), and~$\bigJoin J \le \bigMeet M$ since~$j \le m$ for any~$j \in J$ and~$m \in M$.
\end{proof}

Our next statement says that~$\CC$ is not only a simplicial complex, it is actually naturally embedded on the boundary of a cross-polytope.

\begin{proposition}
\label{prop:symmetryKappa}
For any~$j \in \JI$, the pair~$\{j, \kappa_\meet(j)\}$ is not in~$\CC$.
Hence, for any labeling ${\lambda : \JI \to [|\JI|]}$, the map sending~$j$ to~$\b{e}_{\lambda(j)}$ and $\kappa_\meet(j)$ to~$-\b{e}_{\lambda(j)}$ defines an embedding of~$\CC$ to the boundary of the $|\JI|$-dimensional cross-polytope.
\end{proposition}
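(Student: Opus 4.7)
The plan splits naturally into two parts, matching the two sentences of the statement.

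\textbf{Non-face claim.} The key identity is the dual of the defining property of $\kappa_\join$: for every $j \in \JI$, the element $\kappa_\meet(j) \in \MI$ satisfies $j \meet \kappa_\meet(j) = j_\star$. From this I immediately get $j \not\le \kappa_\meet(j)$, since $j \le \kappa_\meet(j)$ would force $j = j \meet \kappa_\meet(j) = j_\star$, contradicting $j_\star \lessdot j$. By \cref{def:canonicalComplex}, any face $J \sqcup M$ of $\CC$ must satisfy $\bigJoin J \le \bigMeet M$; applied to $J = \{j\}$ and $M = \{\kappa_\meet(j)\}$ this would demand $j \le \kappa_\meet(j)$, contradiction. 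So $\{j, \kappa_\meet(j)\} \notin \CC$.

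\textbf{Embedding claim.} Given a bijection $\lambda : \JI \to [|\JI|]$, extend it to $\MI$ by setting $\lambda(m) \eqdef \lambda(\kappa_\join(m))$; this uses that $\kappa_\join : \MI \to \JI$ is a bijection (\cref{prop:canonicalJoinComplex}\,(ii) together with the dual). The proposed vertex map~$\phi$ sends $j \mapsto \b{e}_{\lambda(j)}$ and $m \mapsto -\b{e}_{\lambda(m)}$, and is injective on $\JI \sqcup \MI$ because $\lambda$ restricted to $\JI$ and to $\MI$ are both bijections onto $[|\JI|]$ but land in opposite half-spaces. Recall that the boundary complex of the $n$-dimensional cross-polytope is the simplicial complex on $\{\pm \b{e}_1, \dots, \pm \b{e}_n\}$ whose faces are exactly the subsets containing no antipodal pair $\{\b{e}_i, -\b{e}_i\}$.

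\textbf{Simpliciality of $\phi$.} Let $J \sqcup M$ be a face of $\CC$. I need to check that $\phi(J \sqcup M)$ contains no antipodal pair, i.e., that there is no pair $j \in J$, $m \in M$ with $\lambda(j) = \lambda(m)$, equivalently $m = \kappa_\meet(j)$. By flagness of $\CC$ (\cref{prop:canonicalComplexFlag}), the subset $\{j\} \sqcup \{m\}$ is itself a face of $\CC$; but the first part of the proof shows that $\{j\} \sqcup \{\kappa_\meet(j)\}$ is never such a face, hence $m \ne \kappa_\meet(j)$. This is the whole content: the map $\phi$ is an injective simplicial map from $\CC$ to the boundary of the $|\JI|$-dimensional cross-polytope.

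The only substantive step is the identity $j \meet \kappa_\meet(j) = j_\star$; everything else is a direct translation between the combinatorics of $\CC$ and the combinatorics of the cross-polytope. I do not anticipate a genuine obstacle, but if any care is needed it is in explicitly invoking the dual of \cref{prop:semidistributive} to justify that identity.
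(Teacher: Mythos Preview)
Your proof is correct and follows the same approach as the paper, which simply notes that $j \not\le \kappa_\meet(j)$ ``by definition'' (indeed $\kappa_\meet(j) \in K_\meet(j, j_\star)$ directly says $\kappa_\meet(j) \not\ge j$, so your detour through $j \meet \kappa_\meet(j) = j_\star$ is valid but unnecessary) and then appeals to \cref{prop:canonicalComplexFlag}. One terminological slip: when you say ``by flagness of~$\CC$ \dots\ the subset $\{j\} \sqcup \{m\}$ is itself a face,'' you are using downward closure (the simplicial complex property), not flagness; both are established in \cref{prop:canonicalComplexFlag}, so the citation is fine, but the word is wrong.
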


\begin{proof}
By definition, $j \not\le \kappa_\meet(j)$ so that~$\{j, \kappa_\meet(j)\}$ is not in~$\CC$.
The second sentence thus follows from \cref{prop:canonicalComplexFlag}.
\end{proof}

\begin{remark}
\label{rem:symmetryKappa}
Denote by~$\kappa$ the map on~$\JI \sqcup \MI$ defined by~$\kappa(m) \eqdef \kappa_\join(m)$ for~$m \in \MI$ and~$\kappa(j) \eqdef \kappa_\meet(j)$ for~$j \in \JI$.
It corresponds to the central symmetry on the corresponding cross-polytope.
By \cref{prop:canonicalJoinComplex}\,(ii), the two subcomplexes $\CJC$ and $\CMC$ are symmetric under the action of~$\kappa$.
However, the full canonical complex~$\CC$ is not invariant under the action of~$\kappa$.
See \cref{fig:CCs} for examples.
\end{remark}

\begin{example}
The canonical complex of the boolean lattice on~$[n]$ is isomorphic to the boundary of the $n$-dimensional cross-polytope.
\end{example}

Finally, analogously to \cref{prop:canonicalJoinComplexQuotient}, the canonical complex is compatible with lattice congruences of~$L$.

\begin{proposition}
\label{prop:canonicalComplexQuotient}
For any congruence~$\equiv$ of a finite semidistributive lattice~$L$, the canonical complex~$\CC[L/{\equiv}]$ of the quotient~$L/{\equiv}$ is isomorphic to the subcomplex~$\CC[\equiv]$ of the canonical complex~$\CC$ of~$L$ induced by the disjoint union~$\UJI \sqcup \UMI$ of the join and meet irreducible elements of~$L$ uncontracted by~$\equiv$.
\end{proposition}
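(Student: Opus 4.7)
The plan is to build an explicit simplicial isomorphism $\CC[L/{\equiv}] \cong \CC[\equiv]$ by going through the interval model of \cref{prop:intervals}: a face $J \sqcup M$ of the canonical complex encodes the interval $[\bigJoin J, \bigMeet M]$, and I want to match intervals of $L$ whose endpoints are respectively minimal and maximal in their congruence classes with intervals of the quotient $L/{\equiv}$.

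First I would set up the bijection on vertices. By \cref{prop:canonicalJoinRepresentationsQuotient}, every $j \in \UJI$ has $\CJR(j) = \{j\} \subseteq \UJI$, so $j$ is minimal in its congruence class and this class is a join irreducible of $L/{\equiv}$. Conversely, the second bullet of the same proposition implies that every join irreducible of $L/{\equiv}$ is the class of a unique $j \in \UJI$, namely the unique element of the canonical joinand of its minimum. This yields a bijection $\UJI \to \JI[L/{\equiv}]$, and dually a bijection $\UMI \to \MI[L/{\equiv}]$.

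Next I would transport faces through this vertex identification. Given a face $J \sqcup M$ of $\CC[\equiv]$, set $x \eqdef \bigJoin J$ and $y \eqdef \bigMeet M$. Since $\CJR(x) = J \subseteq \UJI$, \cref{prop:canonicalJoinRepresentationsQuotient} yields that $x$ is minimal in its class and that the canonical joinands of this class in $L/{\equiv}$ correspond under the vertex bijection to the elements of $J$; dually for $y$ and $M$. The inequality $x \le y$ in $L$ implies the analogous inequality between their classes in $L/{\equiv}$, so the image is a face of $\CC[L/{\equiv}]$. The inverse direction is symmetric: a face of $\CC[L/{\equiv}]$ corresponds to an interval $[X,Y]$ of $L/{\equiv}$, and pulling back to the vertex set of $\CC$ produces $\CJR(\projDown[\equiv](X)) \sqcup \CMR(\projUp[\equiv](Y))$, which lies in $\UJI \sqcup \UMI$ by \cref{prop:canonicalJoinRepresentationsQuotient} and its dual; the inequality $\projDown[\equiv](X) \le \projUp[\equiv](Y)$ in $L$ follows from $X \le Y$ in the quotient combined with the order-preserving nature of $\projDown[\equiv]$ and $\projUp[\equiv]$.

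Both constructions act vertex-by-vertex through the bijection of the first step and are visibly inverse to one another, so they automatically give a simplicial isomorphism. The only substantive ingredient is \cref{prop:canonicalJoinRepresentationsQuotient} and its dual, which govern the translation between canonical representations in $L$ and in $L/{\equiv}$; everything else is bookkeeping on congruence classes, and I do not foresee any real obstacle beyond carefully matching the canonical joinands and meetands across the two lattices.
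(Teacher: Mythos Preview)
Your proposal is correct and follows essentially the same approach as the paper: the paper's proof is a single sentence invoking \cref{prop:canonicalJoinComplexQuotient} and its dual (which in turn rest on \cref{prop:canonicalJoinRepresentationsQuotient}), while you unpack this explicitly, in particular checking that the order condition $\bigJoin J \le \bigMeet M$ transfers correctly between $L$ and $L/{\equiv}$. One small notational quibble: in your inverse direction $X$ and $Y$ are already congruence classes, so writing $\projDown[\equiv](X)$ is a slight abuse; what you mean (and what works) is that if $X \le Y$ in the quotient then the minimum $x$ of $X$ and the maximum $y$ of $Y$ satisfy $x \le y$ in $L$, since $x' \le y'$ for some representatives implies $x \le x' \le y' \le y$.
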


\begin{proof}
This immediately follows from \cref{prop:canonicalJoinComplexQuotient} and its dual version.
\end{proof}

\begin{example}[Distributive lattices]
\label{exm:canonicalComplexDistributiveLattice}
With the notations of~\cref{exm:kappaDistributiveLattice}, we have~$j_y \subseteq m^z \iff y \not\ge z$.
Hence, the canonical complex~$\CC$ is the clique complex of the graph whose vertex set is made of two copies~$P_\join$ and~$P_\meet$ of~$P$ and whose edge set is the union of two copies~$I_\join$ and~$I_\meet$ of the incomparability graph of~$P$ with the edges~$\{y_\join, z_\meet\}$ for~$y \not\ge z$ in~$P$.
\end{example}

%%%%%%%%%%%%%%%%%%%%%%%%%%%%%%%%%%%%%%

\section{Semi-crossing arc bidiagrams}
\label{sec:bidiagrams}

In this section, we apply the lattice theoretic results presented in \cref{sec:canonicalComplex} to the classical weak order on permutations.
We first recall that the canonical join and meet representations of the weak order can be encoded as non-crossing arc diagrams as defined by N.~Reading in~\cite{Reading-arcDiagrams} (\cref{subsec:NCADs}).
We then briefly study the restriction of the weak order on join or meet irreducibles in terms of arcs (\cref{subsec:weakOrderArcs}).
We then describe the intervals and the canonical complex of the weak order in terms of semi-crossing arc diagrams (\cref{subsec:SCABs}).
We then provide direct bijections between the semi-crossing arc bidiagrams and the weak order interval posets of~\cite{ChatelPilaudPons} (\cref{subsec:WOIPs}).
Finally, we provide an algorithm to compute the Kreweras maps in any lattice quotient of the weak order, generalizing the classical Kreweras complement on non-crossing partitions (\cref{subsec:KrewerasBidiagrams}).

%%%%%%%%%%%%%%%%

\subsection{Non-crossing arc diagrams}
\label{subsec:NCADs}

We consider the following classical order on the set~$\f{S}_n$ of permutations of $[n] \eqdef \{1, \dots, n\}$, illustrated in \cref{fig:weakOrder}.

\begin{definition}
An \defn{inversion} of a permutation $\sigma \in \f{S}_n$ is a pair $(u,v)$ with~$1 \le u < v \le n$ and~$\sigma^{-1}(u) > \sigma^{-1}(v)$ (in other words, $u$ is smaller than~$v$ but $u$ appears after~$v$ in~$\sigma$).
The (right) \defn{weak order} of size $n$ is the order on permutations of~$\f{S}_n$ defined by inclusion of their inversion sets.
\end{definition}

\begin{figure}
	\centerline{\includegraphics{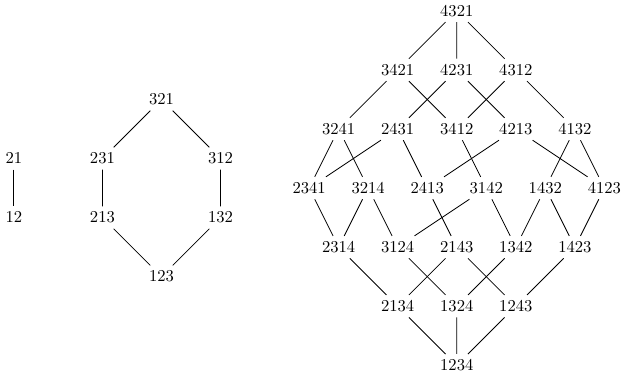}}
	\caption{Hasse diagrams of the right weak orders of size $2$, $3$, and $4$.}
	\label{fig:weakOrder}
\end{figure}

Note that a cover relation in the weak order corresponds to the swap of two values~$\sigma_i$ and~$\sigma_{i+1}$ at consecutive positions.
The swap is increasing in the weak order if $i$ is an \defn{ascent} \ie $\sigma_i < \sigma_{i+1}$, and decreasing if $i$ is a \defn{descent} \ie $\sigma_i > \sigma_{i+1}$.

It is classical that the weak order is a semidistributive lattice (the lattice property was proved in~\cite{GuilbaudRosenstiehl,Bjorner}, the semidistributivity in~\cite{ContePolyBarbut}).
We now describe its join (resp.~meet) irreducible elements and its canonical join (resp.~meet) representations in terms of the arcs and non-crossing arc diagrams introduced by N.~Reading in~\cite{Reading-arcDiagrams}.

\begin{definition}[\cite{Reading-arcDiagrams}]
\label{def:NCAD}
An \defn{arc} is a quadruple~$(a,b,A,B)$ where $1 \le a < b \le n$ and ${A \sqcup B = {]a,b[}}$ forms a partition of~$]a,b[ \eqdef \{a+1, \dots, b-1\}$.
Two arcs~$\alpha \eqdef (a, b, A, B)$ and~$\alpha' \eqdef (a', b', A', B')$ \defn{cross} if there exist~$u \ne v$ such that ${u \in (A' \cup \{a', b'\}) \cap (B \cup \{a, b\})}$ and ${v \in (A \cup \{a, b\}) \cap (B' \cup \{a', b'\})}$.
A \defn{non-crossing arc diagram} (or \defn{NCAD} for short) is a collection of pairwise non-crossing arcs.
The \defn{non-crossing complex} is the clique complex of the non-crossing relation on all arcs.
\end{definition}

\begin{remark}
Visually, an arc~$(a, b, A, B)$ is represented by an $x$-monotone curve wiggling around the horizontal axis, starting at~$a$ and ending at~$b$, and passing above points of $A$ and below points of $B$. 
Two arcs cross if they cross in their interiors or start at the same point or end at the same point (but they do not cross if one ends where the other starts).
See \cref{fig:diagExamples} for illustrations of arcs and of non-crossing arc diagrams.
\end{remark}

Observe that the join (resp.~meet) irreducible elements of the weak order are precisely the permutations with exactly one descent (resp.~ascent).
Hence, we associate to an arc~$\alpha \eqdef (a, b, A, B)$ with~$A \eqdef \{a_1 < \dots < a_k\}$ and~$B \eqdef \{b_1 < \dots < b_\ell\}$
\begin{itemize}
\item a join irreducible permutation $\b{\sigma}_\join(\alpha) \eqdef [1, \dots, (a-1), a_1, \dots, a_k, b, a, b_1, \dots, b_\ell, (b+1), \dots, n]$, 
\item a meet irreducible permutation $\b{\sigma}_\meet(\alpha) \eqdef [n, \dots, (b+1), a_k, \dots, a_1, a, b, b_\ell, \dots, b_1, (a-1), \dots, 1]$,
\end{itemize}
where we use the one-line notation of permutations~$\sigma = [\sigma_1, \dots, \sigma_n]$.
\begin{figure}[h]
	\centerline{\includegraphics[scale=.85]{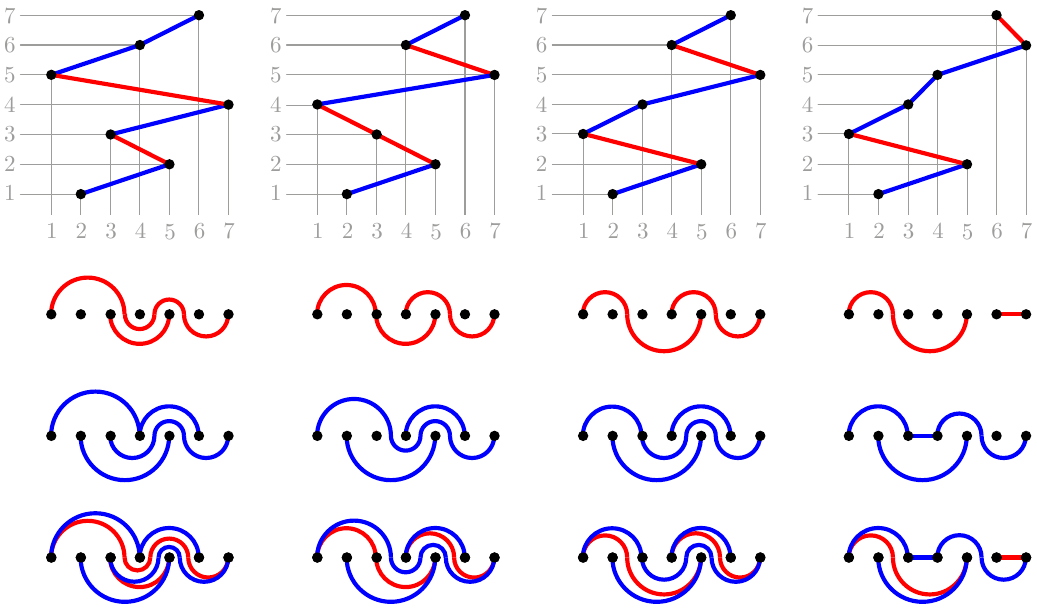}}
	\caption{NCADs and SCABs of the permutations $2537146$, $2531746$, $2513746$, and $2513476$. The first line represents the table~$(\sigma(i), i)$ of a permutation~$\sigma$ with ascents in blue and descents in red, the second line is the join diagram $\b{\delta}_\join(\sigma)$, the third line is the meet diagram $\b{\delta}_\meet(\sigma)$, and the fourth line is the superimposition~$\b{\delta}_\join(\sigma) \sqcup \b{\delta}_\meet(\sigma)$.}
	\label{fig:diagExamples}
\end{figure}

Consider now a permutation~$\sigma \in \f{S}_n$ represented by its permutation table formed by dots at coordinates~$(\sigma_i,i)$ for~$i \in [n]$.
Draw segments between consecutive dots~$(\sigma_i, i)$ and~$(\sigma_{i+1}, i+1)$, colored red for a descent~$\sigma_i > \sigma_{i+1}$ and blue for an ascent~$\sigma_i < \sigma_{i+1}$.
Finally, flatten the picture vertically to the horizontal line, allowing segments to bend but not to pass points.
The resulting picture is the superimposition of a set~$\b{\delta}_\join(\sigma)$ of red arcs and a set~$\b{\delta}_\meet(\sigma)$ of blue arcs.
See \cref{fig:diagExamples}.
More formally, $\b{\delta}_\join(\sigma) \eqdef \set{\b{\alpha}_\join(\sigma, i)}{\sigma_i > \sigma_{i+1}}$ and $\b{\delta}_\meet(\sigma) \eqdef \set{\b{\alpha}_\meet(\sigma, i)}{\sigma_i < \sigma_{i+1}}$ 
where
\begin{align*}
\b{\alpha}_\join(\sigma, i) & \eqdef (\sigma_{i+1}, \sigma_i, \set{\sigma_j}{j < i \text{ and } \sigma_i > \sigma_j > \sigma_{i+1}}, \set{\sigma_j}{j > i+1 \text{ and } \sigma_i > \sigma_j > \sigma_{i+1} }), \\ 
\text{and}\quad
\b{\alpha}_\meet(\sigma, i) & \eqdef (\sigma_i, \sigma_{i+1}, \set{\sigma_j}{j < i \text{ and } \sigma_i < \sigma_j < \sigma_{i+1}}, \set{\sigma_j}{j > i+1 \text{ and } \sigma_i < \sigma_j < \sigma_{i+1}}).
\end{align*}

\begin{proposition}[\cite{Reading-arcDiagrams}]
\label{prop:NCAD}
The map~$\b{\delta}_\join$ (resp.~$\b{\delta}_\meet$) is a bijection between the set of permutations of~$\f{S}_n$ and the set of non-crossing arc diagrams of size~$n$.
Moreover, the canonical join (resp.~meet) representation of a permutation~$\sigma \in \f{S}_n$ is given by~$\CJR(\sigma) = \set{\b{\sigma}_\join(\alpha_\join)}{\alpha_\join \in \b{\delta}_\join(\sigma)}$ (resp.~$\CMR(\sigma) = \set{\b{\sigma}_\meet(\alpha_\meet)}{\alpha_\meet \in \b{\delta}_\meet(\sigma)}$).
Hence, the canonical join (resp.~meet) complex of the weak order is isomorphic to the non-crossing complex.
\end{proposition}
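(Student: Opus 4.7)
It suffices to prove the statement for $\b{\delta}_\join$; the case of $\b{\delta}_\meet$ follows by duality, since the weak order is self-dual under $\sigma \mapsto w_0 \sigma w_0$ (where $w_0 = [n, n-1, \dots, 1]$), which exchanges ascents with descents and meets with joins. I would first check that $\alpha \mapsto \b{\sigma}_\join(\alpha)$ is a bijection between arcs and join irreducible permutations of $\f{S}_n$. The one-line formula for $\b{\sigma}_\join(a,b,A,B)$ exhibits a single descent at the transition from $b$ to $a$, with the prefix and suffix each sorted increasingly, so $\b{\sigma}_\join(\alpha)$ is join irreducible. Conversely, any join irreducible permutation $\pi$ has exactly one descent at some position $p$, and decomposes uniquely as $a = \pi_{p+1}$, $b = \pi_p$, $A = \{\pi_j : j < p\} \cap {]a,b[}$, $B = \{\pi_j : j > p+1\} \cap {]a,b[}$, recovering a unique arc.

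The heart of the argument is to show simultaneously that $\b{\delta}_\join(\sigma)$ is non-crossing and that the associated set of join irreducibles is $\CJR(\sigma)$. Since the weak order is semidistributive, \cref{prop:semidistributive} gives $\CJR(\sigma) = \set{k_\join(\tau, \sigma)}{\tau \lessdot \sigma}$, and the covers $\tau \lessdot \sigma$ are indexed by descents of $\sigma$ via adjacent swaps. For each such descent generating an arc $\alpha$ and cover $\tau$, I would verify that $\b{\sigma}_\join(\alpha)$ is the minimum of $K_\join(\tau, \sigma)$: a direct inversion-set comparison shows that (i) its inversion set is contained in $\inv(\sigma)$, so $\b{\sigma}_\join(\alpha) \le \sigma$; (ii) it contains the distinguishing inversion removed by $\tau$, so $\b{\sigma}_\join(\alpha) \not\le \tau$; and (iii) any other element $\pi \in K_\join(\tau, \sigma)$ with minimal rank must itself be join irreducible with defining arc equal to $\alpha$, because each value $x \in {]a,b[}$ is forced to lie in the ``above'' side if it appears before the descent position in $\sigma$ and in the ``below'' side otherwise, by the constraint $\inv(\pi) \subseteq \inv(\sigma)$ combined with the restriction that $\pi$ has a single descent realizing the inversion $(a,b)$. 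Non-crossing of $\b{\delta}_\join(\sigma)$ is then a separate combinatorial check: a hypothetical crossing between arcs at two distinct descents $i \ne j$ produces witnesses $u \ne v$ whose required positions in $\sigma$ are incompatible with both descents occurring.

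Bijectivity of $\b{\delta}_\join$ then follows formally: injectivity from the identity $\sigma = \bigJoin \set{\b{\sigma}_\join(\alpha)}{\alpha \in \b{\delta}_\join(\sigma)}$, and surjectivity by setting $\sigma \eqdef \bigJoin \set{\b{\sigma}_\join(\alpha)}{\alpha \in D}$ for any NCAD $D$ and using the flag property of $\CJC$ from \cref{prop:canonicalJoinComplex}\,(i) to lift pairwise non-crossing to joint membership in a single canonical join representation, forcing $\b{\delta}_\join(\sigma) = D$. The isomorphism between $\CJC$ and the non-crossing complex is then immediate: both are flag, both share the same vertex set via the bijection of step one, and their edges coincide by the canonical join characterization. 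The \emph{main obstacle} is the minimality assertion in step two: verifying that the sides $A$ and $B$ of the arc are exactly forced by the positions of the intermediate values in $\sigma$ requires a somewhat careful case analysis, whereas the other inversion-set comparisons and the non-crossing check reduce to routine bookkeeping.
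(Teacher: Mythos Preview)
The paper does not give its own proof of this proposition; it is quoted verbatim from \cite{Reading-arcDiagrams}, so there is no internal argument to compare against. Your plan for injectivity and for the identification $\CJR(\sigma)=\set{\b{\sigma}_\join(\alpha)}{\alpha\in\b{\delta}_\join(\sigma)}$ via \cref{prop:semidistributive} is sound and is essentially how Reading proceeds. The gap is in your surjectivity step.

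You propose, for an arbitrary NCAD $D$, to set $\sigma \eqdef \bigJoin_{\alpha\in D}\b{\sigma}_\join(\alpha)$ and then to invoke the flag property of $\CJC$ (\cref{prop:canonicalJoinComplex}\,(i)) to conclude that $\set{\b{\sigma}_\join(\alpha)}{\alpha\in D}$ is a face of $\CJC$, hence equal to $\CJR(\sigma)$, hence $\b{\delta}_\join(\sigma)=D$. But flagness only says: a subset of $\JI$ is a face of $\CJC$ as soon as each of its \emph{pairs} is an edge of $\CJC$. What you have established so far is the implication ``edge of $\CJC$ $\Rightarrow$ non-crossing pair of arcs'' (this is exactly the statement that $\b{\delta}_\join(\sigma)$ is always non-crossing). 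You have \emph{not} established the converse ``non-crossing pair of arcs $\Rightarrow$ edge of $\CJC$'', and without it the flagness argument is circular: you are assuming precisely the $2$-element case of the surjectivity you are trying to prove. Concretely, for two non-crossing arcs $\alpha\ne\alpha'$ you would still need to show that $\{\b{\sigma}_\join(\alpha),\b{\sigma}_\join(\alpha')\}$ is the canonical join representation of $\b{\sigma}_\join(\alpha)\join\b{\sigma}_\join(\alpha')$ --- in particular that neither joinand is redundant and that no strictly smaller join-irreducible can replace either one --- and this is not a formality.

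Reading closes this gap differently: he writes down an explicit inverse $\b{\delta}_\join^{-1}$ from NCADs to permutations (reading off a permutation by stacking the arcs and scanning the endpoints) and checks directly that it is a two-sided inverse. If you want to keep your lattice-theoretic route, the cleanest fix is to prove the missing $2$-element statement: for non-crossing $\alpha\ne\alpha'$, compute $\inv\!\big(\b{\sigma}_\join(\alpha)\join\b{\sigma}_\join(\alpha')\big)$ (the transitive closure of the union of the two inversion sets), identify its two descents, and verify that the two arcs produced by $\b{\delta}_\join$ are exactly $\alpha$ and $\alpha'$. Once that is in hand, your flagness argument does the rest.
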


By construction, the non-crossing arc diagrams are adapted to the maps~$\kappa_\join$ and~$\kappa_\meet$ and to quotients of the weak order.
First, our next statement says that the maps~$\kappa_\join$ and~$\kappa_\join$ only change the colors of the arcs.
We provide a proof as we have not found it explicitly in~\cite{Reading-arcDiagrams}.

\begin{proposition}
\label{prop:kappaChangesColor}
$\kappa_\join(\b{\sigma}_\meet(\alpha)) = \b{\sigma}_\join(\alpha)$ and~$\kappa_\meet(\b{\sigma}_\join(\alpha)) = \b{\sigma}_\meet(\alpha)$ for any arc~$\alpha$.
\end{proposition}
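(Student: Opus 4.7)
The plan is to use the characterization from \cref{prop:semidistributive}\,(ii): setting $m \eqdef \b{\sigma}_\meet(\alpha)$, the permutation $m$ is meet irreducible and covered by a unique element $m^\star$, so that $\kappa_\join(m) = k_\join(m, m^\star)$ is by definition the unique minimum of $K_\join(m, m^\star) = \set{z \in \f{S}_n}{m \join z = m^\star}$. I will identify this minimum as $j \eqdef \b{\sigma}_\join(\alpha)$ by a direct computation with inversion sets, using that the join $\sigma \join \tau$ in the weak order has inversion set equal to the closure of $\inv(\sigma) \cup \inv(\tau)$ under the two axioms of a \emph{bi-closed} subset of $\set{(u,v)}{1 \le u < v \le n}$: for $u<v<w$, (a) $(u,v),(v,w) \in I$ implies $(u,w) \in I$, and (b) $(u,w) \in I$ implies $(u,v) \in I$ or $(v,w) \in I$.

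The key bookkeeping step is to read off, from the explicit six-block one-line notation of $m = \b{\sigma}_\meet(\alpha)$, that $m^\star$ is obtained from $m$ by swapping its unique ascent (the adjacent pair $a,b$), that $\inv(m^\star) = \inv(m) \sqcup \{(a,b)\}$, and that the non-inversions of $m^\star$ are exactly the pairs $(a_i, b)$ for $a_i \in A$, $(a, b_j)$ for $b_j \in B$, and $(a_i, b_j)$ for $a_i \in A, b_j \in B, a_i < b_j$. A similar inspection of the one-line notation of $j$ shows that $\inv(j) \subseteq \inv(m^\star)$ and that $\inv(j) \setminus \{(a,b)\} \subseteq \inv(m)$, hence $\inv(m) \cup \inv(j) = \inv(m^\star)$, which is already bi-closed; this gives $m \join j = m^\star$ and so $j \in K_\join(m, m^\star)$. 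For the minimality, take any $z \in K_\join(m, m^\star)$: then $\inv(z) \subseteq \inv(m^\star)$ and $\inv(z) \not\subseteq \inv(m)$, which forces $(a,b) \in \inv(z)$. Applying bi-closure axiom~(b) to $(a,b)$ and each intermediate value $v$ with $a<v<b$ then forces $(a, a_i) \in \inv(z)$ for every $a_i \in A$ (since $(a_i,b)$ is a non-inversion of $m^\star$, hence of $z$) and $(b_j, b) \in \inv(z)$ for every $b_j \in B$ (since $(a, b_j)$ is a non-inversion of $m^\star$). Since $\inv(z)$ is itself bi-closed and $\inv(j)$ is the bi-closure of the set $\{(a,b)\} \cup \set{(a, a_i)}{a_i \in A} \cup \set{(b_j, b)}{b_j \in B}$, we conclude $\inv(z) \supseteq \inv(j)$, i.e.~$z \ge j$. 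This proves $\kappa_\join(\b{\sigma}_\meet(\alpha)) = \b{\sigma}_\join(\alpha)$.

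The second identity $\kappa_\meet(\b{\sigma}_\join(\alpha)) = \b{\sigma}_\meet(\alpha)$ then follows immediately from \cref{prop:canonicalJoinComplex}\,(ii), which states that $\kappa_\join$ and $\kappa_\meet$ are mutually inverse bijections between $\MI$ and~$\JI$. The main obstacle is purely combinatorial bookkeeping: cataloguing inversions and non-inversions from the explicit one-line notations of $m$, $m^\star$, and $j$. Once that catalogue is assembled, the join computation $m \join j = m^\star$ and the minimality argument both reduce to straightforward applications of the two bi-closure axioms, and no detailed matching with Reading's bijection from \cref{prop:NCAD} is needed.
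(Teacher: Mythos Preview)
Your approach is essentially the paper's: both identify $\kappa_\join(m)$ as the minimum of $K_\join(m,m^\star)$ and verify it is~$j=\b{\sigma}_\join(\alpha)$. The paper argues in terms of positions (any $\tau\in K_\join(m,m^\star)$ must have ``all of $A$ before $b$ before $a$ before all of $B$''), which by transitivity of position immediately yields every inversion in~$\inv(j)$; you unfold the same computation through inversion sets and the two closure axioms. Both are fine, and your catalogue of the non-inversions of~$m^\star$ and the verification that $m\join j=m^\star$ are correct.

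There is, however, one incorrect step in your minimality argument. The sentence ``$\inv(j)$ is the bi-closure of $S\eqdef\{(a,b)\}\cup\set{(a,a_i)}{a_i\in A}\cup\set{(b_j,b)}{b_j\in B}$'' is false: there is in general no smallest bi-closed set containing~$S$. For instance with $n=4$ and $\alpha=(1,4,\{3\},\{2\})$, the set $\inv(4231)=\{(1,2),(1,3),(1,4),(2,4),(3,4)\}$ is bi-closed and contains~$S$, yet it omits $(2,3)\in\inv(j)$. So you cannot conclude $\inv(j)\subseteq\inv(z)$ from bi-closedness of~$\inv(z)$ alone. The fix is immediate and uses exactly the mechanism you already deployed: to obtain the remaining inversions $(b_j,a_i)\in\inv(j)$ (for $b_j\in B$, $a_i\in A$, $b_j<a_i$), apply axiom~(b) to $(b_j,b)\in\inv(z)$ with intermediate value~$a_i$, and use once more that $(a_i,b)$ is a non-inversion of~$m^\star$ and hence of~$z$, forcing $(b_j,a_i)\in\inv(z)$. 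With this extra line the proof is complete.
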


\begin{proof}
Let~$\alpha \eqdef (a, b, A, B)$, let~$\sigma \eqdef \b{\sigma}_\meet(\alpha) = [n, \dots, (b+1), a_k, \dots, a_1, a, b, b_\ell, \dots, b_1, (a-1), \dots, 1]$ and let~$\sigma^\star = [n, \dots, (b+1), a_k, \dots, a_1, b, a, b_\ell, \dots, b_1, (a-1), \dots, 1]$ denote the only element covering~$\sigma$.
Any permutation~$\tau$ with~$\tau \not\le \sigma$ but~$\tau \le \sigma^\star$ must have all elements of~$A$ before~$b$ before~$a$ before all elements of~$B$.
Therefore, the minimal such permutation is clearly $\b{\sigma}_\join(\alpha) = [1, \dots, (a-1), a_1, \dots, a_k, b, a, b_1, \dots, b_\ell, (b+1), \dots, n]$.
\end{proof}

\begin{proposition}[\cite{Reading-arcDiagrams}]
\label{prop:subarcs}
For any arcs~$\alpha \eqdef (a, b, A, B)$ and~$\alpha' \eqdef (a', b', A', B')$, the join irreducible~$\b{\sigma}_\join(\alpha)$ forces the join irreducible~$\b{\sigma}_\join(\alpha')$ if and only if~$\alpha$ is a \defn{subarc} of~$\alpha'$, meaning that~$a' \le a < b \le b'$ and~$A \subseteq A'$ while~$B \subseteq B'$.
Hence, to each upper ideal~$I$ of the subarc order corresponds a lattice congruence~$\equiv_I$ of the weak order, and the canonical join (resp.~meet) complex of the quotient of the weak order by~$\equiv_I$ is isomorphic to the non-crossing complex on~$I$.
\end{proposition}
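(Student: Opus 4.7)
The heart of the statement is the identification of the forcing preorder on join irreducibles with the subarc order; the assertions about quotients and non-crossing complexes on~$I$ then follow immediately from \cref{prop:canonicalJoinComplexQuotient} and its meet-dual. The plan is to compute the minimal congruence $\con(\b{\sigma}_\join(\alpha))$ directly, by iteratively closing the relation $\{\b{\sigma}_\join(\alpha)_\star \equiv \b{\sigma}_\join(\alpha)\}$ under the congruence axioms, tracking which arcs get contracted at each step.

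The key elementary step is the analysis of a single \emph{braid hexagon} --- the rank-$3$ interval of the weak order obtained by arranging three values $a < b < c$ at three consecutive positions $i, i+1, i+2$ of some ambient permutation. A direct computation shows that the four ``short'' covers incident to the top or bottom element of such a hexagon come in two pairs sharing identical arcs, one pair with endpoints $(a, b)$ and one pair with endpoints $(b, c)$, while the two remaining ``long'' covers carry two distinct arcs with endpoints $(a, c)$ that agree on all interior values except the middle value~$b$, which is routed above the arc in one and below the arc in the other. Applying the congruence axioms inside the hexagon (via successive joins and meets with the other elements of the hexagon), one checks that contracting either ``short'' cover forces contracting both ``long'' covers. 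Translated at the level of arcs, this is precisely a cover of the subarc order: the short arc is extended by one unit at one of its endpoints, and the newly swept value~$b$ is absorbed into either~$A$ or~$B$.

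Iterating this elementary forcing --- and verifying that every elementary subarc cover $\alpha \lessdot \alpha'$ can be realized through a braid hexagon sitting inside some ambient permutation of~$\f{S}_n$ --- we conclude that $\con(\b{\sigma}_\join(\alpha))$ contracts every super-arc of $\alpha$. For the converse inclusion, the set of super-arcs of $\alpha$ is itself closed under these elementary hexagonal forcings, by transitivity of the subarc order, so by minimality $\con(\b{\sigma}_\join(\alpha))$ contracts nothing more. This identifies the forcing preorder with the subarc order. The main obstacle in the plan is the realizability check: since a braid hexagon only involves three consecutive positions, one must exhibit, for each elementary subarc cover, a concrete ambient permutation in which the cover for $\alpha$ sits inside the appropriate hexagon --- a routine but delicate bookkeeping with the definitions of~$\b{\sigma}_\join$ and~$\b{\alpha}_\join$.
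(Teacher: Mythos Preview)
The paper does not prove this proposition itself; it is cited from Reading~\cite{Reading-arcDiagrams}, so there is no in-paper argument to compare against. Your outline is in fact Reading's own strategy: reduce the forcing preorder to forcing inside the rank-$2$ intervals (polygons) of the weak order---in type~$A$ these are squares and braid hexagons---and identify the hexagonal forcings with the cover relations of the subarc order.

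Your forward direction (subarc implies forcing) is fine, and the realizability check you flag as the ``main obstacle'' is indeed routine bookkeeping. The genuine gap is in the converse. You assert that the set of super-arcs of~$\alpha$ is closed under hexagonal forcing and conclude ``by minimality $\con(\b{\sigma}_\join(\alpha))$ contracts nothing more.'' This inference is valid only if hexagonal (polygonal) forcing \emph{generates} the full forcing preorder on~$\JI[\f{S}_n]$. That is true, but it is a theorem---the polygonality, equivalently congruence uniformity, of the weak order---and it is exactly the structural input Reading relies on from his earlier work on the poset of regions. Without it, closing the single relation $\b{\sigma}_\join(\alpha)_\star \equiv \b{\sigma}_\join(\alpha)$ under the full congruence axioms (joins and meets ranging over all of~$\f{S}_n$, not confined to one hexagon at a time) could in principle contract join irreducibles whose arcs are not super-arcs of~$\alpha$, and your sketch offers nothing to rule that out. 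You should either invoke polygonality explicitly, or bypass it by exhibiting, for each arc~$\beta$ that is not a super-arc of~$\alpha$, a concrete congruence of the weak order contracting~$\b{\sigma}_\join(\alpha)$ but not~$\b{\sigma}_\join(\beta)$.
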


\begin{remark}
\begin{figure}
	\centerline{\includegraphics[scale=.5]{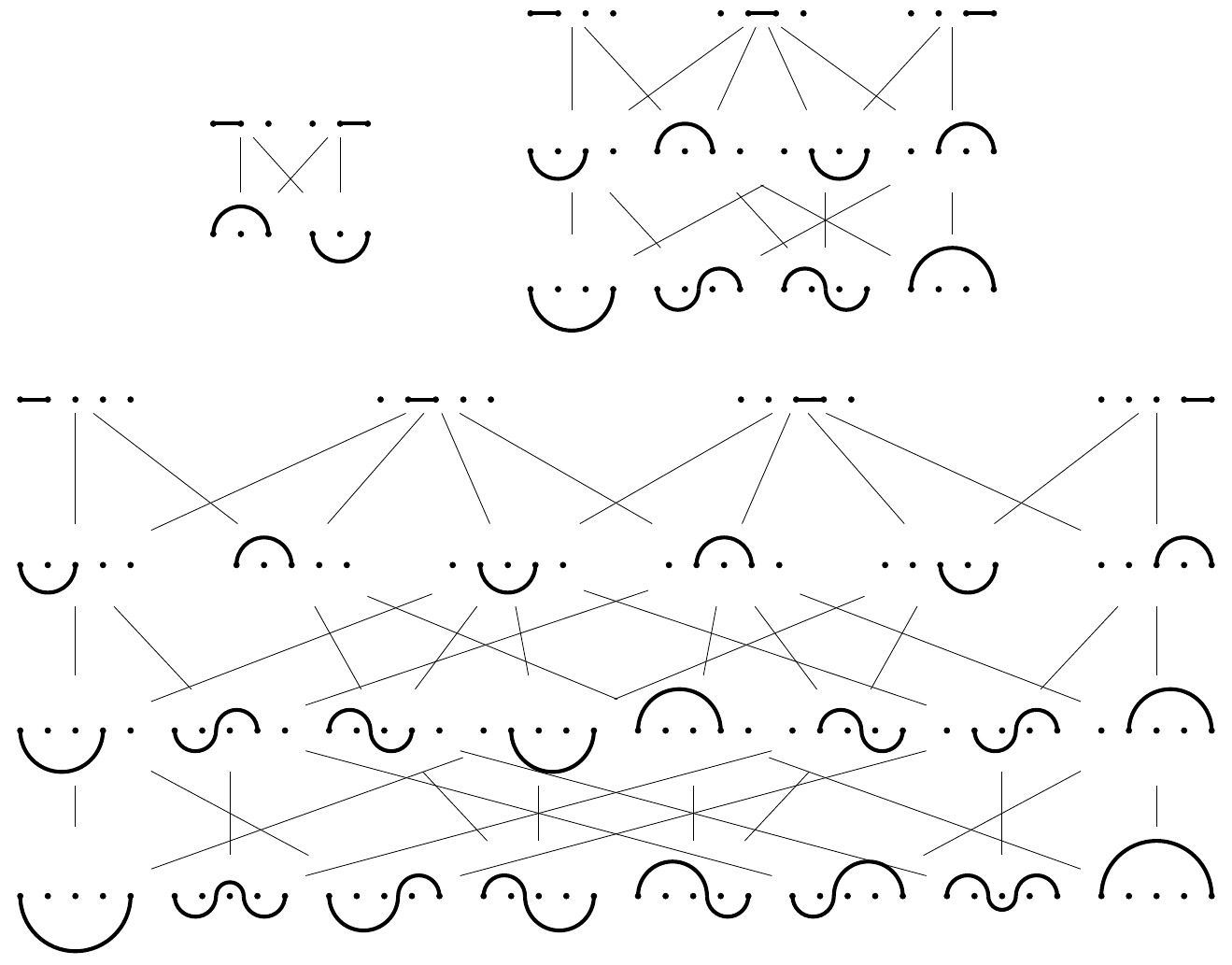}}
	\caption{The subarc order on arcs of sizes $3$ (top left), $4$ (top right), and $5$ (bottom).}
	\label{fig:forcing35}
\end{figure}
Visually, $\alpha$ is a subarc of~$\alpha'$ if the endpoints of~$\alpha$ are weakly in between the endpoints of~$\alpha'$, and $\alpha$ follows $\alpha'$ between its endpoints.
The subarc order on arcs of size~$3$ to~$5$ is represented in \cref{fig:forcing35}.
\end{remark}

\begin{example}
The prototypical congruence of the weak order is the \defn{sylvester congruence}~$\equiv_\textrm{sylv}$~\cite{LodayRonco,HivertNovelliThibon-algebraBinarySearchTrees}, which can be defined equivalently as
\begin{itemize}
\item the fiber of the binary search tree insertion (inserting a permutation from right to left),
\item the congruence where each class is the set of linear extensions of a binary tree (labeled in inorder and oriented toward its root),
\item the transitive closure of the rewriting rule~$UacVbW \equiv UcaVbW$ for~$1 \le a < b < c \le n$,
\item the congruence corresponding to the upper ideal of the subarc order given by all up arcs~$(a, b, {]a,b[}, \varnothing)$ (or equivalently, generated by the long up arc~$(1, n, {]1,n[}, \varnothing)$.
\end{itemize}
Hence, a permutation is minimal (resp.~maximal) in its class if and only if it avoids the pattern~$312$~(resp.~$132$).
The quotient of the weak order by the sylvester congruence is (isomorphic to) the classical \defn{Tamari lattice}~\cite{Tamari,HuangTamari}, whose elements are the binary trees on~$n$ nodes and whose cover relations are rotations in binary trees.
The canonical join representations in the Tamari lattice correspond to non-crossing sets of up arcs, also known as \defn{non-crossing partitions}.
The sylvester congruence was extended in~\cite{Reading-CambrianLattices} to Cambrian congruences and in~\cite{PilaudPons-permutrees} to permutree congruences.
\end{example}

%%%%%%%%%%%%%%%%

\subsection{Weak order on arcs}
\label{subsec:weakOrderArcs}

We now briefly compare join or meet irreducible elements in the weak order in terms of arcs.
For this, we first observe that the inversions of~$\b{\sigma}_\join(\alpha)$ and~$\b{\sigma}_\meet(\alpha)$ are easily read on the arc~$\alpha$.

\begin{lemma}
\label{lem:arcInversions}
For any arc $\alpha \eqdef (a, b, A, B)$ and any~$u < v$, the pair~$(u, v)$ is an inversion of~$\b{\sigma}_\join(\alpha)$ (resp.~of~$\b{\sigma}_\meet(\alpha)$) if and only if~$u \in B \cup \{a\}$ and $v \in A \cup \{b\}$ (resp.~if~$u \notin A \cup \{a\}$ or~$v \notin B \cup \{b\}$).
\end{lemma}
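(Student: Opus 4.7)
The plan is to read the inversions directly off the one-line notations of $\b{\sigma}_\join(\alpha)$ and $\b{\sigma}_\meet(\alpha)$. Since an inversion of $\sigma$ is a pair $u<v$ with $u$ appearing to the right of $v$ in $\sigma$, and since both permutations are piecewise monotone, the characterization reduces to a case analysis on which block contains $u$ and which contains $v$.

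For the join version, I would partition $\{1,\dots,n\}$ into the six blocks $[1,a-1]$, $A$, $\{a\}$, $\{b\}$, $B$, $[b+1,n]$ (note: $a<b$ as values but $b$ appears left of $a$ in $\b{\sigma}_\join(\alpha)$) and observe that $\b{\sigma}_\join(\alpha)$ is increasing on each of them and increasing from each block to the next, except between the blocks $\{b\}$ and $\{a\}$. Running through pairs of blocks with $u<v$, a pair $(u,v)$ is inverted exactly in the following cases: $u=a$ and $v=b$; $u=a$ and $v\in A$; $u\in B$ and $v=b$; $u\in B$ and $v\in A$ (with $u<v$). Taking the union, this is exactly the condition $u\in B\cup\{a\}$ and $v\in A\cup\{b\}$, as required.

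For the meet version, rather than redoing the case analysis, I would exploit the reversal symmetry. Writing $\b{\sigma}_\meet(a,b,A,B)$ in reverse order yields $[1,\dots,a-1,b_1,\dots,b_\ell,b,a,a_1,\dots,a_k,b+1,\dots,n]$, which is precisely the one-line notation of $\b{\sigma}_\join(a,b,B,A)$. Since reversing a permutation exchanges inversions and non-inversions on all pairs $u<v$, the pair $(u,v)$ is an inversion of $\b{\sigma}_\meet(\alpha)$ if and only if it is \emph{not} an inversion of $\b{\sigma}_\join(a,b,B,A)$. Applying the join part of the lemma with $A$ and $B$ swapped, this non-inversion condition becomes $u\notin A\cup\{a\}$ or $v\notin B\cup\{b\}$, as claimed.

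The only delicate point is bookkeeping the many cases in the join step; in particular one must keep in mind that elements of $A$ and $B$ are interleaved inside $\mathopen]a,b\mathclose[$, so the pair $(u,v)$ with $u\in B$ and $v\in A$ is only relevant when $u<v$. The reversal argument for the meet part is essentially free once the join part is established, so no additional obstacle arises there.
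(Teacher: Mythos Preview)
Your argument is correct and matches the paper's approach, which simply declares the result ``immediate from the definition'' of the one-line notations of $\b{\sigma}_\join(\alpha)$ and $\b{\sigma}_\meet(\alpha)$; you have merely spelled out the case analysis that the paper leaves implicit. Your reversal observation for the meet case (that reversing $\b{\sigma}_\meet(a,b,A,B)$ gives $\b{\sigma}_\join(a,b,B,A)$) is a pleasant shortcut not mentioned in the paper, but it is a cosmetic variation rather than a different route.
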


\begin{proof}
Immediate from the definition~$\b{\sigma}_\join(\alpha) \eqdef [1, \dots, (a-1), a_1, \dots, a_k, b, a, b_1, \dots, b_\ell, (b+1), \dots, n]$ and~$\b{\sigma}_\meet(\alpha) \eqdef [n, \dots, (b+1), a_k, \dots, a_1, a, b, b_\ell, \dots, b_1, (a-1), \dots, 1]$.
\end{proof}

\begin{corollary}
\label{coro:weakOrderArcs}
For any two arcs~$\alpha \eqdef (a, b, A, B)$ and~$\alpha' \eqdef (a', b', A', B')$, we have
\begin{enumerate}[(i)]
\item $\b{\sigma}_\join(\alpha) \le \b{\sigma}_\join(\alpha')$ if and only if $a \in B' \cup \{a'\}$ and~$b \in A' \cup \{b'\}$, and~$A \subseteq A'$ and~$B \subseteq B'$,
\item $\b{\sigma}_\meet(\alpha) \le \b{\sigma}_\meet(\alpha')$ if and only if $a' \in B \cup \{a\}$ and~$b' \in A \cup \{b\}$, and~$A' \subseteq A$ and~$B' \subseteq B$,
\item $\b{\sigma}_\join(\alpha) \le \b{\sigma}_\meet(\alpha')$ if and only if there is no~$u < v$ such that~${u \in (A' \cup \{a'\}) \cap (B \cup \{a\})}$ and ${v \in (A \cup \{b\}) \cap (B' \cup \{b'\})}$.
\end{enumerate}
\end{corollary}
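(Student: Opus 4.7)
The whole corollary is proved by unfolding $\sigma \le \tau$ in the weak order as the inversion-set inclusion $\inv(\sigma) \subseteq \inv(\tau)$ and then substituting the explicit descriptions of the inversions of $\b{\sigma}_\join(\alpha)$ and $\b{\sigma}_\meet(\alpha)$ given by \cref{lem:arcInversions}. The three parts then follow the same general strategy.

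For part (i), this unfolding yields the set inclusion
\[
\set{(u,v)}{u < v, \; u \in B \cup \{a\}, \; v \in A \cup \{b\}} \subseteq \set{(u,v)}{u < v, \; u \in B' \cup \{a'\}, \; v \in A' \cup \{b'\}}.
\]
The ``if'' direction is an immediate check from the four listed conditions. For the ``only if'' direction, I would extract each condition by plugging in a carefully chosen test pair: the pair $(a, b)$ gives the two endpoint conditions; for any $x \in A$, the pair $(a, x)$ yields $x \in A' \cup \{b'\}$, and the possibility $x = b'$ is then ruled out by combining $x < b$ with the previously obtained constraint on $b$, so that $x \in A'$; the symmetric argument with $(x, b)$ for $x \in B$ gives $B \subseteq B'$.

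Part (ii) follows the same pattern, but since the inversions of $\b{\sigma}_\meet$ are characterized by a negation in \cref{lem:arcInversions}, it is more convenient to pass to the contrapositive and reason about non-inversions: the comparison becomes the inclusion
\[
\set{(u,v)}{u < v, \; u \in A' \cup \{a'\}, \; v \in B' \cup \{b'\}} \subseteq \set{(u,v)}{u < v, \; u \in A \cup \{a\}, \; v \in B \cup \{b\}},
\]
and the four conditions on the arcs come out by testing the pairs $(a', b')$, $(a', v)$ for $v \in B'$, and $(u, b')$ for $u \in A'$, with the same disentangling trick as in~(i). For part (iii), no new ingredient is needed: the inclusion $\inv(\b{\sigma}_\join(\alpha)) \subseteq \inv(\b{\sigma}_\meet(\alpha'))$ reformulates directly as ``there is no pair $(u, v)$ which is simultaneously an inversion of $\b{\sigma}_\join(\alpha)$ and a non-inversion of $\b{\sigma}_\meet(\alpha')$'', and substituting the two characterizations of \cref{lem:arcInversions} into this statement produces exactly the stated condition.

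The only delicate point, which arises in both (i) and (ii), is the book-keeping required to refine a weak inclusion such as $A \subseteq A' \cup \{b'\}$ into the tighter $A \subseteq A'$. Each such refinement combines the strict inequalities $a < x < b$ for $x \in A \cup B$ with the previously extracted endpoint condition; the case analysis is short but must be carried out for each of the four symmetric sub-inclusions before the conditions listed in the corollary can be read off cleanly.
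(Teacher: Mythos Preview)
Your proposal is correct and follows exactly the approach the paper intends: the paper does not give an explicit proof of \cref{coro:weakOrderArcs} but treats it as an immediate consequence of \cref{lem:arcInversions}, and what you have written is precisely the unfolding of that implication via inversion-set inclusion. Your handling of the one nontrivial detail --- refining $A \subseteq A' \cup \{b'\}$ to $A \subseteq A'$ by combining $x < b$ with the already-established endpoint condition $b \in A' \cup \{b'\}$ --- is correct and is the only place where a little care is actually required.
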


\begin{remark}
\label{rem:weakOrderArcs}
\cref{fig:induction35} shows the weak order on arcs defined by~$\alpha \le \alpha'$ if $\b{\sigma}_\join(\alpha) \le \b{\sigma}_\join(\alpha')$.
Visually, $\alpha \le \alpha'$ if $\alpha$ is a subarc of~$\alpha'$ which starts weakly below~$\alpha'$ and ends weakly above~$\alpha'$.
Note that~$\alpha \eqdef (a, b, A, B)$ covers at most two arcs, namely~${(\min B, b, A \cap {]\min b, b[}, B \ssm \min B)}$ and ${(a, \max A, A \ssm \max A, B \cap {]a, \max A[})}$ when they are defined.
Similar remarks hold for the order defined by~$\b{\sigma}_\meet(\alpha)$ instead of~$\b{\sigma}_\join(\alpha)$.
\begin{figure}
	\centerline{\includegraphics[scale=.25]{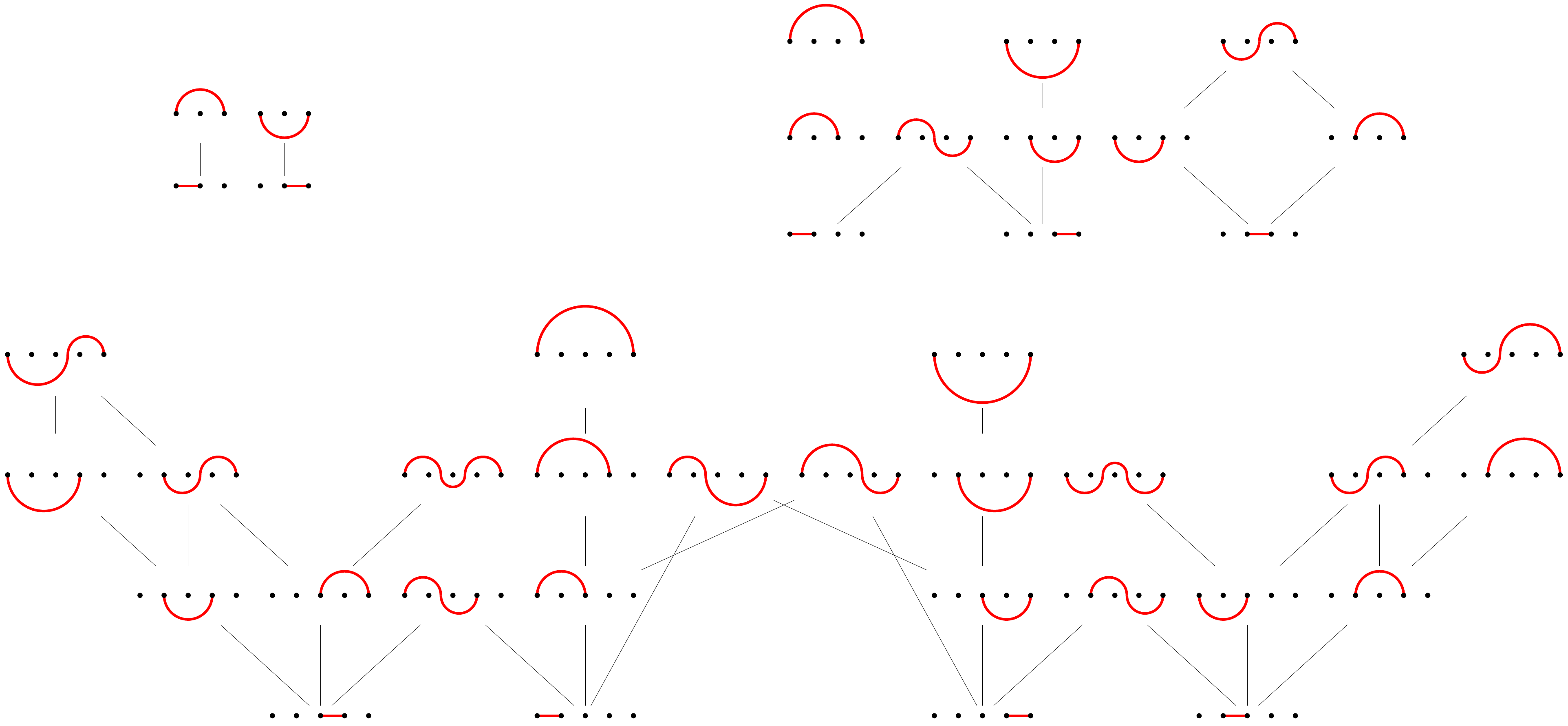}}
	\caption{The weak orders of size $3$ (top left), $4$ (top right), and $5$ (bottom) restricted to their join irreducibles represented by the corresponding arcs.}
	\label{fig:induction35}
\end{figure}
\end{remark}

\begin{remark}
As illustrated in \cref{fig:induction35}, the weak order on join irreducible of~$\f{S}_n$ has interesting enumerative properties. Let us just mention here that it has
\begin{itemize}
\item $2^n - n - 1$ elements (permutations with a single descent, or arcs) \OEIS{A000295},
\item $2^{n+1} - n^2 - n - 2$ cover relations (in bijection with arcs of size $n+1$ crossing the horizontal axis, or with subsets of $[n+1]$ crossing their complement) \OEIS{A324172},
\item $n(n+1)2^{n-2}$ intervals (including the singletons) \OEIS{A001788}.
\end{itemize}
\end{remark}

%%%%%%%%%%%%%%%%

\subsection{Semi-crossing arc bidiagrams}
\label{subsec:SCABs}

We now describe the canonical complex of the weak order as defined in \cref{subsec:canonicalComplex} in terms of the following combinatorial objects, illustrated in \cref{fig:bidExamples}.

\begin{figure}[b]
	\centerline{\includegraphics[scale=.5]{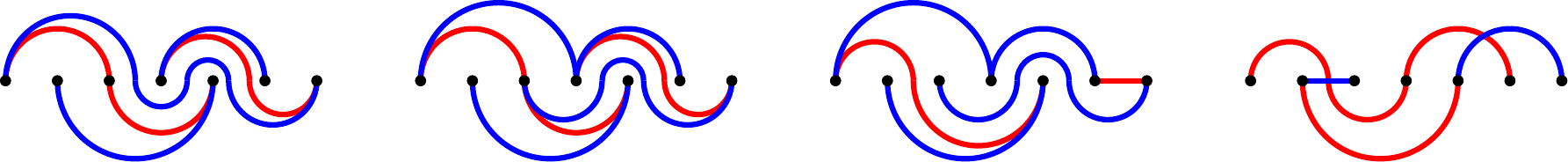}}
	\caption{SCABs of the intervals $[2531746, 2531746], [2531746, 2537146], [2513476, 2537146]$ and $[5264137, 6574231]$.}
	\label{fig:bidExamples}
\end{figure}

\begin{definition}
\label{def:SCAB}
A \defn{semi-crossing arc bidiagram} (or \defn{SCAB} for short) is a disjoint union~$\delta_\join \sqcup \delta_\meet$ of non-crossing arc diagrams such that for any~$\alpha_\join \eqdef (a_\join, b_\join, A_\join, B_\join) \in \delta_\join$ and ${\alpha_\meet \eqdef (a_\meet, b_\meet, A_\meet, B_\meet) \in \delta_\meet}$, there is no~$u < v$ with ${u \in (A_\meet \cup \{a_\meet\}) \cap (B_\join \cup \{a_\join\})}$ and ${v \in (A_\join \cup \{b_\join\}) \cap (B_\meet \cup \{b_\meet\})}$.
The \defn{semi-crossing complex} is the simplicial complex whose ground set contains two copies~$\alpha_\join$ and~$\alpha_\meet$ of each arc~$\alpha$ and whose simplices are all semi-crossing arc bidiagrams.
\end{definition}

\begin{remark}
Visually, a semi-crossing arc bidiagram~$\delta_\join \sqcup \delta_\meet$ is a collection of arcs such that
\begin{itemize}
\item no two arcs of~$\delta_\join$ (resp.~of~$\delta_\meet$) cross in their interiors, or start or end at the same points,
\item no two arcs~$\alpha_\join \in \delta_\join$ and~$\alpha_\meet \in \delta_\meet$ cross in their interiors with~$\alpha_\join$ going up and $\alpha_\meet$ going down at the crossing, or start at the same point with~$\alpha_\join$ leaving above~$\alpha_\meet$, or end at the same point with~$\alpha_\join$ arriving below~$\alpha_\meet$ at this point.
\end{itemize}
\end{remark}

\begin{remark}
Before going further, we report in \cref{table:SCABs} on the number of semi-crossing arc bidiagrams~${\delta_\join \sqcup \delta_\meet}$ according to the cardinalities~$|\delta_\join|$ and~$|\delta_\meet|$ for~$n = 2$ to~$6$.

\begin{table}
	\begin{gather*}
	\begin{array}[t]{l|cc}
		  & 0 & 1 \\
		\hline
		0 & 1 & 1 \\ 
		1 & 1 & 0
	\end{array}
	\qquad
	\begin{array}[t]{l|ccc}
		  0 & 1 & 2 \\
		\hline
		0 & 1 & 4 & 1 \\
		1 & 4 & 6 & 0 \\ 
		2 & 1 & 0 & 0
	\end{array}
	\qquad
	\begin{array}[t]{l|cccc}
		  & 0 & 1 & 2 & 3 \\
		\hline
		0 & 1 & 11 & 11 & 1 \\ 
		1 & 11 & 54 & 24 & 0 \\
		2 & 11 & 24 & 2 & 0 \\ 
		3 & 1 & 0 & 0 & 0
	\end{array}
	\\
	\begin{array}[t]{l|ccccc}
		  & 0 & 1 & 2 & 3 & 4 \\
		\hline
		0 & 1 & 26 & 66 & 26 & 1 \\
		1 & 26 & 300 & 420 & 80 & 0 \\ 
		2 & 66 & 420 & 320 & 20 & 0 \\ 
		3 & 26 & 80 & 20 & 0 & 0 \\ 
		4 & 1 & 0 & 0 & 0 & 0
	\end{array}
	\qquad
	\begin{array}[t]{l|cccccc}
		  & 0 & 1 & 2 & 3 & 4 & 5 \\
		\hline
		0 & 1 & 57 & 302 & 302 & 57 & 1 \\
		1 & 57 & 1340 & 4145 & 2505 & 240 & 0 \\
		2 & 302 & 4145 & 8270 & 3035 & 120 & 0 \\
		3 & 302 & 2505 & 3035 & 562 & 5 & 0 \\ 
		4 & 57 & 240 & 120 & 5 & 0 & 0 \\ 
		5 & 1 & 0 & 0 & 0 & 0 & 0
	\end{array}
	\end{gather*}
	\caption{The number of semi-crossing arc bidiagrams~${\delta_\join \sqcup \delta_\meet}$ according to the cardinalities~$|\delta_\join|$ and~$|\delta_\meet|$ for~$n = 2$ to~$6$.}
	\label{table:SCABs}
\end{table}

In these tables, observe that
\begin{itemize}
\item the first row (resp.~column) corresponds to the intervals~$[\sigma, w_\circ]$ (resp.~$[e, \sigma]$) for the permutations~$\sigma$ of~$[n]$ with $k$ ascents (resp.~descents) and are thus counted by the Eulerian numbers~\OEIS{A008292},
\item the last row (resp.~column) corresponds to the single interval~$[w_\circ, w_\circ]$ (resp.~$[e, e]$).
\end{itemize}
\end{remark}

We now connect the semi-crossing arc bidiagrams with the canonical complex of the weak order using \cref{coro:weakOrderArcs}.

\begin{proposition}
\label{prop:SCAB}
The map~$[\sigma, \tau] \mapsto \b{\delta}_\join(\sigma) \sqcup \b{\delta}_\meet(\tau)$ is a bijection between the intervals of the weak order on~$\f{S}_n$ and the semi-crossing arc bidiagrams.
Hence, the canonical complex of the weak order is isomorphic to the semi-crossing complex.
\end{proposition}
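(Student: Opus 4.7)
The plan is to assemble the bijection from three ingredients already in place: \cref{prop:intervals} relating intervals of a semidistributive lattice to faces of its canonical complex, \cref{prop:NCAD} relating canonical join/meet representations of permutations to non-crossing arc diagrams, and \cref{coro:weakOrderArcs}\,(iii) describing the weak order between a join and a meet irreducible in terms of their arcs. Combined, these will give precisely the semi-crossing condition.

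Here is the sketch. I start from the bijection of \cref{prop:intervals}: an interval $[\sigma, \tau]$ of the weak order corresponds to the face $\CJR(\sigma) \sqcup \CMR(\tau)$ of the canonical complex~$\CC$. By \cref{prop:NCAD}, $\CJR(\sigma)$ corresponds under $\b{\sigma}_\join$ to the non-crossing arc diagram $\b{\delta}_\join(\sigma)$, and $\CMR(\tau)$ corresponds under $\b{\sigma}_\meet$ to $\b{\delta}_\meet(\tau)$. Therefore what remains is to verify that the face condition $\bigJoin \CJR(\sigma) \le \bigMeet \CMR(\tau)$ translates exactly into the semi-crossing condition of \cref{def:SCAB} between the arcs of $\b{\delta}_\join(\sigma)$ and $\b{\delta}_\meet(\tau)$.

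For this translation, observe first a general lattice fact: for any~$J, M \subseteq L$, one has $\bigJoin J \le \bigMeet M$ if and only if $j \le m$ for every~$j \in J$ and every~$m \in M$ (indeed, $\bigJoin J$ is the least upper bound of $J$, and $\bigMeet M$ is the greatest lower bound of $M$, so $\bigJoin J \le \bigMeet M$ is equivalent to $j \le m$ for all such pairs). Applied to $J = \CJR(\sigma)$ and $M = \CMR(\tau)$, this reduces the face condition to checking $\b{\sigma}_\join(\alpha_\join) \le \b{\sigma}_\meet(\alpha_\meet)$ for each pair of arcs $\alpha_\join \in \b{\delta}_\join(\sigma)$ and $\alpha_\meet \in \b{\delta}_\meet(\tau)$. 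By \cref{coro:weakOrderArcs}\,(iii), the latter inequality holds if and only if there is no~$u < v$ with $u \in (A_\meet \cup \{a_\meet\}) \cap (B_\join \cup \{a_\join\})$ and $v \in (A_\join \cup \{b_\join\}) \cap (B_\meet \cup \{b_\meet\})$, which is exactly the condition of \cref{def:SCAB}. Hence $[\sigma,\tau]$ is an interval if and only if $\b{\delta}_\join(\sigma) \sqcup \b{\delta}_\meet(\tau)$ is a SCAB, and the map is a bijection.

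The second sentence of the proposition then follows immediately: the face set of $\CC$ and the face set of the semi-crossing complex are in bijection via the arc-labelings $\b{\sigma}_\join, \b{\sigma}_\meet$, and this bijection respects inclusion of faces because a subset of a canonical representation is again a canonical representation (by flagness of $\CJC$ and $\CMC$, \cref{prop:canonicalJoinComplex}) and restricting to a sub-pair of arcs trivially preserves the semi-crossing condition. The only delicate point---and the place I expect one to want to be careful---is the reduction of $\bigJoin J \le \bigMeet M$ to pairwise inequalities; this is routine but deserves an explicit sentence since it is the hinge that converts the global interval condition into the arc-by-arc combinatorial condition defining SCABs.
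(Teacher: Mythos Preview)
Your proof is correct and follows essentially the same route as the paper's: both invoke \cref{prop:NCAD} to identify canonical representations with non-crossing arc diagrams and \cref{coro:weakOrderArcs}\,(iii) to translate the inequality~$\sigma \le \tau$ into the semi-crossing condition. Your version is simply more explicit, spelling out the reduction of~$\bigJoin J \le \bigMeet M$ to pairwise inequalities and the role of \cref{prop:intervals}, whereas the paper compresses these into two sentences.
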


\begin{proof}
By \cref{prop:NCAD}, the maps~$\sigma \mapsto \b{\delta}_\join(\sigma)$ and~$\tau \mapsto \b{\delta}_\meet(\tau)$ are both bijections from permutations to non-crossing arc diagrams.
Moreover, $\sigma \le \tau$ if and only if each canonical joinand of~$\sigma$ is smaller than each canonical meetand of~$\tau$, which is equivalent to each arc of~$\b{\delta}_\join(\sigma)$ being semi-crossing each arc of~$\b{\delta}_\meet(\tau)$ by \cref{coro:weakOrderArcs}\,(iii).
Hence, $[\sigma, \tau] \mapsto \b{\delta}_\join(\sigma) \sqcup \b{\delta}_\meet(\tau)$ is a bijection from intervals to semi-crossing arc bidiagrams.
Finally, $\b{\delta}_\join(\sigma) \sqcup \b{\delta}_\meet(\tau)$ corresponds to the canonical representation of~$[\sigma, \tau]$ since~$\b{\delta}_\join(\sigma)$ corresponds to the canonical join representation of~$\sigma$ and~$\b{\delta}_\meet(\tau)$ corresponds to the canonical meet representation of~$\tau$.
\end{proof}

\begin{figure}[b]
	\centerline{\includegraphics[scale=1]{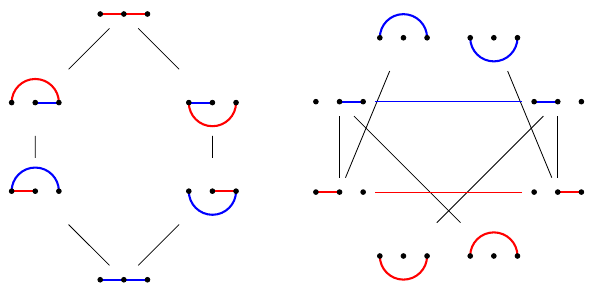}}
	\caption{The weak order on~$\f{S}_3$ with permutations labeled by semi-crossing arc bidiagrams, and the canonical complex of $\mathfrak{S}_3$ with join and meet irreducible permutations labeled by arcs.}
	\label{fig:CCS3}
\end{figure}

For instance, the canonical complexes of the weak orders on~$\f{S}_3$ and~$\f{S}_4$ are illustrated in \cref{fig:CCS3,fig:CCS4}.
As usual, since the canonical complex is flag by \cref{prop:canonicalComplexFlag}, we only represent its graph.
The central symmetry corresponds to the map~$\kappa$ of \cref{rem:symmetryKappa}, which just corresponds to the exchange of color of the arcs by~\cref{prop:kappaChangesColor}.

We now characterize the semi-crossing arc bidiagrams corresponding to singleton intervals.
As illustrated in \cref{fig:diagExamples}, these semi-crossing arc bidiagrams are certain paths.
We thus define the \defn{source}~$s(\alpha)$ and the \defn{target}~$t(\alpha)$ of an arc~$\alpha \eqdef (a,b,A,B)$ in a semi-crossing arc bidiagram~$\delta_\join \sqcup \delta_\meet$ as~$s(\alpha) = b$ and~$t(\alpha) = a$ if~$\alpha \in \delta_\join$ and~$s(\alpha) = a$, and~$t(\alpha) = b$ if~$\alpha \in \delta_\meet$.

\begin{proposition}
\label{prop:singletonIntervals}
The following conditions are equivalent for a semi-crossing arc bidiagram~$\delta_\join \sqcup \delta_\meet$:
\begin{enumerate}[(i)]
\item $\delta_\join \sqcup \delta_\meet = \b{\delta}_\join(\sigma) \sqcup \b{\delta}_\meet(\sigma)$ for a permutation~$\sigma \in \f{S}_n$,
\item there is an labeling~$\alpha_1 = (a_1, b_1, A_1, B_1), \dots, \alpha_{n-1} = (a_{n-1}, b_{n-1}, A_{n-1}, B_{n-1})$ of~$\delta_\join \sqcup \delta_\meet$ such that~$t(\alpha_i) = s(\alpha_{i+1})$ and~$a_i \notin B_j$ for~$1 \le i < j \le n-1$.
\end{enumerate}
If these conditions hold, then the arcs of~$\delta_\join \sqcup \delta_\meet$ do not contain crossings in their interiors.
\end{proposition}

\begin{proof}
For (i) $\Rightarrow$ (ii), the arc~$\alpha_i$ is the arc~$\b{\alpha}_\join(\sigma, i)$ if~$\sigma_i > \sigma_{i+1}$ and $\b{\alpha}_\meet(\sigma, i)$ if~$\sigma_i < \sigma_{i+1}$ described before \cref{prop:NCAD}.
For (ii) $\Rightarrow$ (i), the permutation~$\sigma$ is given by~$[s(\alpha_1), t(\alpha_1), \dots, t(\alpha_{n-1})]$.
\end{proof}

Finally, let us insist again here that this combinatorial model for the intervals of the weak order is adapted to the study of its quotients.
The next statement follows from \cref{prop:kappaChangesColor,prop:subarcs}.

\begin{proposition}
For any lower ideal~$I$ of the subarc order, the canonical complex of the quotient of the weak order by~$\equiv_I$ is isomorphic to the subcomplex of the semi-crossing complex induced by~$\set{\alpha_\join}{\alpha \in I} \sqcup \set{\alpha_\meet}{\alpha \in I}$.
\end{proposition}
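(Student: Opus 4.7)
The plan is to derive this statement as a direct combination of the structural results already established in the paper. I would first apply \cref{prop:canonicalComplexQuotient} to the congruence~$\equiv_I$: this identifies the canonical complex of the quotient of the weak order by~$\equiv_I$ with the subcomplex~$\CC[\equiv_I]$ of the canonical complex of the weak order induced by the disjoint union~$\UJI[\equiv_I] \sqcup \UMI[\equiv_I]$. This reduces the problem to describing these two sets of uncontracted irreducibles in terms of the ideal~$I$.

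For the join irreducibles, \cref{prop:subarcs} directly translates the condition that~$\UJI[\equiv_I]$ be an upper set in the forcing preorder into the corresponding condition on arcs in the subarc order, yielding~$\UJI[\equiv_I] = \set{\b{\sigma}_\join(\alpha)}{\alpha \in I}$. For the meet irreducibles, I would combine two earlier statements: \cref{prop:uncontractedElements} asserts that $\UMI[\equiv_I] = \kappa_\meet \big( \UJI[\equiv_I] \big)$, while \cref{prop:kappaChangesColor} computes $\kappa_\meet(\b{\sigma}_\join(\alpha)) = \b{\sigma}_\meet(\alpha)$ for every arc~$\alpha$. Putting these together gives $\UMI[\equiv_I] = \set{\b{\sigma}_\meet(\alpha)}{\alpha \in I}$.

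Finally, I would invoke the isomorphism of \cref{prop:SCAB} between the canonical complex of the weak order and the semi-crossing complex, which by construction sends~$\b{\sigma}_\join(\alpha)$ to~$\alpha_\join$ and~$\b{\sigma}_\meet(\alpha)$ to~$\alpha_\meet$. Under this isomorphism, the induced subcomplex~$\CC[\equiv_I]$ corresponds precisely to the subcomplex of the semi-crossing complex induced by~$\set{\alpha_\join}{\alpha \in I} \sqcup \set{\alpha_\meet}{\alpha \in I}$, which is the desired conclusion.

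There is no substantial obstacle: this is essentially a bookkeeping exercise that chains together \cref{prop:canonicalComplexQuotient}, \cref{prop:subarcs}, \cref{prop:uncontractedElements}, \cref{prop:kappaChangesColor}, and \cref{prop:SCAB}. The only mildly delicate point is making sure the convention of ``ideal'' in the subarc order used in the statement is consistent with the upper-set condition for uncontracted irreducibles after translation through \cref{prop:subarcs}; once this convention is pinned down, everything else is automatic from the already-proved identifications for both the join and the meet sides.
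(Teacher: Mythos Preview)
Your proposal is correct and matches the paper's approach: the paper simply states that the result ``follows from \cref{prop:kappaChangesColor,prop:subarcs}'', leaving implicit the invocations of \cref{prop:canonicalComplexQuotient}, \cref{prop:uncontractedElements}, and \cref{prop:SCAB} that you have spelled out. Your caution about the ``lower ideal'' versus ``upper ideal'' convention is well-placed, as the paper uses ``upper ideal'' in \cref{prop:subarcs} but ``lower ideal'' in the present statement; this appears to be an inconsistency in the paper rather than a gap in your argument.
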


We conclude with a conjecture motivated by \cref{prop:singletonIntervals} and checked by computer experiments for all lattice quotients of the weak order on~$\f{S}_n$ for~$n \le 5$.

\begin{conjecture}
The semi-crossing arc bidiagram corresponding to an inclusion minimal interval in a lattice quotient of the weak order does not contain any crossing in the interior of its arcs.
\end{conjecture}

%%%%%%%%%%%%%%%%

\subsection{Weak order interval posets}
\label{subsec:WOIPs}

Following a classical result of A.~Bj\"orner and M.~Wachs~\cite[Thm.~6.8]{BjornerWachs}, G.~Ch\^atel, V.~Pilaud and V.~Pons already studied in~\cite{ChatelPilaudPons} a family of posets in bijection with the intervals of the weak order.
For a poset~$\less$ on~$[n]$ and~$1 \le u < v \le n$, we say that the relation~$u \less v$ is \defn{increasing} and that the relation~$v \less u$ is \defn{decreasing} (and we write~$u \more v$ for decreasing relations).

\begin{proposition}[{\cite[Thm.~6.8]{BjornerWachs} \& \cite[Prop.~26]{ChatelPilaudPons}}]
\label{prop:WOIP}
The following conditions are equivalent for a poset~$\less$ on~$[n]$:
\begin{itemize}
\item the linear extensions of $\less$ form an interval~$[\sigma, \tau]$ of the weak order,
\item there are~$\sigma \le \tau$ in the weak order such that the increasing relations of~$\less$ are the non-inversions of~$\tau$ and the decreasing relations are the inversions of~$\sigma$,
\item $a \less c$ implies~$a \less b$ or $b \less c$, and $a \more c$ implies $a \more b$ or $b \more c$ for all~$1 \le a < b < c \le n$.
\end{itemize}
Such a poset is called a \defn{weak order interval poset} (or \defn{WOIP} for short).
\end{proposition}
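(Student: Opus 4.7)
The plan is to prove the cycle (i) $\Rightarrow$ (ii) $\Rightarrow$ (iii) $\Rightarrow$ (i).

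For (i) $\Rightarrow$ (ii), I would take for $\sigma$ and $\tau$ the minimum and maximum elements of the weak order interval formed by the linear extensions of~$\less$, and show directly that the comparability data of~$\less$ is controlled by these two permutations. Concretely, for $a < c$ in~$[n]$, the relation $a \less c$ holds iff every linear extension of~$\less$ places $a$ before~$c$, iff every $\pi \in [\sigma,\tau]$ does so. Since $\inv(\sigma) \subseteq \inv(\pi) \subseteq \inv(\tau)$ for any $\pi \in [\sigma,\tau]$, this is equivalent to $(a,c)$ being a non-inversion of~$\tau$. Dually, $a \more c$ iff $(a,c) \in \inv(\sigma)$.

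For (ii) $\Rightarrow$ (iii), assume $a < b < c$ with $a \less c$, so that $\tau^{-1}(a) < \tau^{-1}(c)$. Then either $\tau^{-1}(a) < \tau^{-1}(b)$, which gives $(a,b) \notin \inv(\tau)$ and hence $a \less b$, or $\tau^{-1}(b) < \tau^{-1}(a) < \tau^{-1}(c)$, which gives $(b,c) \notin \inv(\tau)$ and hence $b \less c$. The dual argument using~$\sigma$ handles $a \more c$.

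The main work lies in (iii) $\Rightarrow$ (i). Given~$\less$ satisfying (iii), I would define the candidate permutations by prescribing their inversion sets:
\[
\inv(\sigma) \eqdef \set{(u,v)}{u < v \text{ and } u \more v}
\qquad\text{and}\qquad
\inv(\tau) \eqdef \set{(u,v)}{u < v \text{ and } u \not\less v}.
\]
The key obstacle is to verify that these are genuine inversion sets of permutations. Recall that a set $I$ of pairs $(u,v)$ with~$u<v$ is the inversion set of a permutation iff, for all $u<v<w$, (a) $(u,v),(v,w) \in I$ implies $(u,w) \in I$ and (b) $(u,w) \in I$ implies $(u,v) \in I$ or $(v,w) \in I$. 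For~$\inv(\sigma)$, (a) is transitivity of the poset~$\less$ while (b) is precisely the second half of condition (iii). For~$\inv(\tau)$, (a) follows from the first half of (iii) (if $u \less w$, then $u \less v$ or $v \less w$, contrapositive) while (b) follows from transitivity of~$\less$. Then $\inv(\sigma) \subseteq \inv(\tau)$ since $u \more v$ excludes $u \less v$, hence $\sigma \le \tau$.

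It only remains to check that the linear extensions of~$\less$ are exactly~$[\sigma,\tau]$. A permutation~$\pi$ is a linear extension of~$\less$ iff for all $u<v$, $u \less v$ forces $(u,v) \notin \inv(\pi)$ and $u \more v$ forces $(u,v) \in \inv(\pi)$. By construction of~$\sigma$ and~$\tau$, this is equivalent to $\inv(\sigma) \subseteq \inv(\pi) \subseteq \inv(\tau)$, that is, $\pi \in [\sigma,\tau]$. The incomparable pairs of~$\less$ correspond to the pairs~$(u,v)$ that can belong or not to~$\inv(\pi)$, matching the freedom inside the interval.
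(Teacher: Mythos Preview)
The paper does not supply its own proof of this proposition; it is stated with citations to \cite{BjornerWachs} and \cite{ChatelPilaudPons} and used as a black box. So there is nothing to compare against, and the relevant question is simply whether your argument stands on its own.

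It does. The cycle (i) $\Rightarrow$ (ii) $\Rightarrow$ (iii) $\Rightarrow$ (i) is the natural route, and each step is handled correctly. In (i) $\Rightarrow$ (ii) you implicitly use that two elements are comparable in a finite poset if and only if they are placed in the same order in every linear extension; this is standard but worth naming. The computation in (ii) $\Rightarrow$ (iii) is immediate. In (iii) $\Rightarrow$ (i) the key point is that the candidate sets $\inv(\sigma)$ and $\inv(\tau)$ satisfy the transitivity and co-transitivity axioms characterizing inversion sets, and you correctly match these axioms to transitivity of~$\less$ and to the two halves of condition~(iii). The final identification of linear extensions with the interval~$[\sigma,\tau]$ is also clean. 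One small stylistic remark: the last sentence about ``the freedom inside the interval'' is intuition rather than proof and could be dropped.
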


Although they are specific to the weak order and do not behave well with respect to its quotients, the WOIPs are combinatorial objects in bijection with intervals of the weak order, and thus with SCABs.
It is thus relevant to provide direct explicit bijections between~SCABs and~WOIPs, which are illustrated in \cref{fig:WOIP_SCAB}.
We need the following definitions, illustrated in \cref{fig:WOIP_SCAB}.

\begin{figure}[b]
	\centerline{\includegraphics[scale=.5]{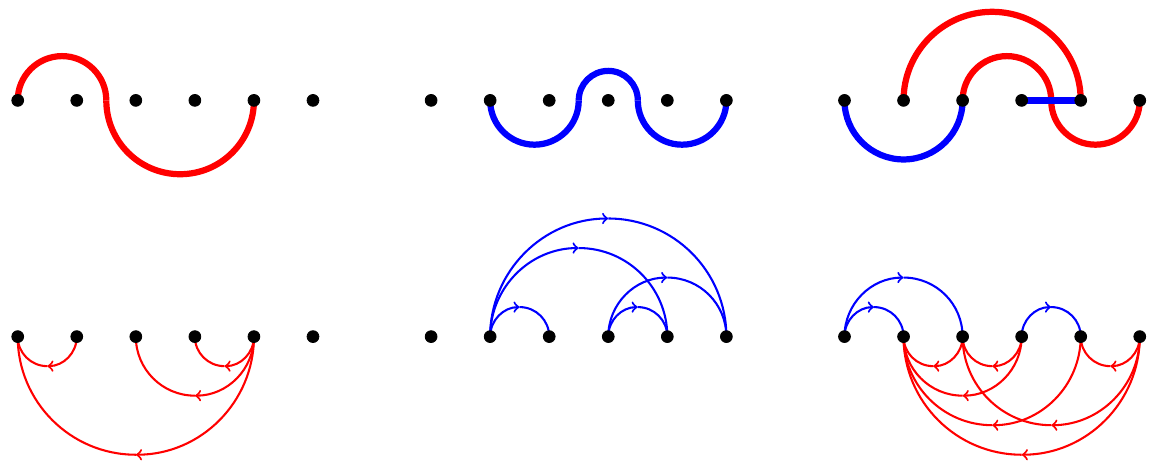}}
	\caption{The bijection between SCABs (top) and WOIPs (bottom). We represent a WOIP $\less$ with arcs joining $a<b$ from above (resp.~below) in blue (resp.~red) when $a\less b$ (resp.~$a\more b$).}
	\label{fig:WOIP_SCAB}
\end{figure}

\medskip\noindent
\textsf{From SCABs to WOIPs.}
For any arc~$\alpha \eqdef (a, b, A, B)$, we denote by~$\b{\less}_\alpha$ the set of increasing relations~$u \; {\b{\less}_\alpha} \, v$ (by~$\b{\more}_\alpha$ the set of decreasing relations~$u \; {\b{\more}_\alpha} \, v$) where~${u < v}$ with~$u \in A \cup \{a\}$ and~$v \in B \cup \{b\}$.
For a NCAD~$\delta$, we denote by~$\b{\less}_\delta$ (resp.~$\b{\more}_\delta$) the transitive closure of the union of the increasing relations~$\set{\b{\less}_{\alpha_\join}}{\alpha_\join \in \delta_\join}$ (resp.~of the decreasing relations~$\set{\b{\more}_{\alpha_\meet}}{\alpha_\meet \in \delta_\meet}$).
Finally, to a SCAB~$\delta_\join \sqcup \delta_\meet$ we associate the WOIP ${\b{\more}_{\delta_\join}} \sqcup {\b{\less}_{\delta_\meet}}$ (see \cref{prop:SCAB-WOIP}).

\medskip\noindent
\textsf{From WOIPs to SCABs.}
Fix a WOIP~${\more} \sqcup {\less}$ where~$\more$ denote the decreasing relations and $\less$ denote the increasing relations.
We say that an increasing (resp.~decreasing) cover relation~$a \lesscover b$ (resp.~$a \morecover b$) is \defn{maximal} if there is no cover relation~$a' \lesscover b$ (resp.~$a' \morecover b$) with~${a' < a}$ or~$a \lesscover b'$ (resp.~$a \morecover b'$) with~${b < b'}$.
To a maximal decreasing (resp.~increasing) cover relation~$a \morecover b$ (resp.~$a \lesscover b$), we associate the arc~$\b{\alpha}(a \morecover b) \eqdef (a, b, {]a,b[} \cap \downIdeal[a], {]a,b[} \cap \upIdeal[b])$ (resp.~${\b{\alpha}(a \lesscover b) \eqdef (a, b, {]a,b[} \cap \downIdeal[b], {]a,b[} \cap \upIdeal[a])}$), where~$\downIdeal[x]$ and~$\upIdeal[x]$ denote the lower and upper ideals of~$\less$ generated by~$x$.
We denote by~$\b{\delta}(\more)$ (resp.~$\b{\delta}(\less)$) the set of arcs~$\b{\alpha}(a \morecover b)$ (resp.~$\b{\alpha}(a \lesscover b)$) for all maximal decreasing (resp.~increasing) cover relations of~$\less$.
Finally, to the WOIP~${\more} \sqcup {\less}$, we associate the SCAB~$\b{\delta}(\more) \sqcup \b{\delta}(\less)$ (see \cref{prop:SCAB-WOIP}).

\begin{proposition}
\label{prop:SCAB-WOIP}
The maps~$\delta_\join \sqcup \delta_\meet \mapsto {\b{\more}_{\delta_\join}} \sqcup {\b{\less}_{\delta_\meet}}$ and~${\more} \sqcup {\less} \mapsto \b{\delta}(\more) \sqcup \b{\delta}(\less)$ are inverse bijections between SCABs and WOIPs.
\end{proposition}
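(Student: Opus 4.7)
The plan is to reduce the claim to two bijections already established. By \cref{prop:SCAB,prop:WOIP}, both SCABs and WOIPs are in bijection with intervals of the weak order. Denote by~$\Phi$ the bijection $\delta_\join \sqcup \delta_\meet \mapsto [\bigJoin_{\alpha \in \delta_\join} \b{\sigma}_\join(\alpha), \bigMeet_{\alpha \in \delta_\meet} \b{\sigma}_\meet(\alpha)]$ of \cref{prop:SCAB}, and by~$\Psi$ the bijection ${\more} \sqcup {\less} \mapsto [\sigma, \tau]$ of \cref{prop:WOIP}, under which the inversions of~$\sigma$ are the $\more$-relations and the non-inversions of~$\tau$ are the $\less$-relations. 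It suffices to prove that the first map of the statement coincides with $\Psi^{-1} \circ \Phi$ and the second with $\Phi^{-1} \circ \Psi$; both bijectivity and the inverse property then follow.

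The first composition reduces to \cref{lem:arcInversions}. For a SCAB $\delta_\join \sqcup \delta_\meet$ with $\Phi$-image $[\sigma, \tau]$, the lemma identifies the set of pairs underlying $\b{\more}_\alpha$ (respectively $\b{\less}_\alpha$) with the inversion set of $\b{\sigma}_\join(\alpha)$ (respectively the non-inversion set of $\b{\sigma}_\meet(\alpha)$). Since the inversion set of a join in the weak order equals the transitive closure of the union of the inversion sets of its joinands (a standard property of the weak order lattice), I obtain $\b{\more}_{\delta_\join} = \inv(\sigma)$ and, dually, that $\b{\less}_{\delta_\meet}$ equals the set of non-inversions of~$\tau$. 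These are exactly the decreasing and increasing relations of $\Psi^{-1}([\sigma, \tau])$.

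The second composition amounts to the set equalities $\b{\delta}(\more) = \b{\delta}_\join(\sigma)$ and, dually, $\b{\delta}(\less) = \b{\delta}_\meet(\tau)$, where $[\sigma, \tau] = \Psi({\more} \sqcup {\less})$. I would prove the first by establishing an explicit bijection between descents of~$\sigma$ and maximal decreasing covers of the WOIP. For a descent~$i$ with $\sigma_i = b > a = \sigma_{i+1}$, the pair $(a,b)$ is an inversion of~$\sigma$ so that $a \more b$; the cover property is checked by a case analysis on any candidate intermediate~$c$ satisfying $a \more c \more b$, using the adjacency of~$b$ and~$a$ in~$\sigma$ together with $\inv(\sigma) \subseteq \inv(\tau)$; maximality holds because any $a' < a$ with $a' \morecover b$ would place~$a'$ strictly past position~$i+1$ in~$\sigma$, turning~$a$ itself into an intermediate $a' \more a \more b$; and the sets $A, B$ defining $\b{\alpha}(a \morecover b)$ directly match those of $\b{\alpha}_\join(\sigma, i)$ once one unfolds the WOIP interpretation of $\downIdeal$ and~$\upIdeal$.

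The main obstacle is the converse: showing every maximal decreasing cover~$a \morecover b$ arises from a descent of~$\sigma$. Letting~$i$ be the position of~$b$ in~$\sigma$ and $c \eqdef \sigma_{i+1}$, assume for contradiction $c \ne a$. The case $a < c < b$ immediately exhibits~$c$ as an intermediate $a \more c \more b$, contradicting the cover. The cases $c < a$ and $c > b$ are more delicate: they require characterizing the covers $x \morecover b$ as the $\less$-minimal elements of $\set{x \in [n]}{x \more b}$ and then following a saturated descending chain from~$c$ through this set, using the maximality hypothesis to force the terminal element~$x^*$ to satisfy $x^* \ge a$ integer-wise, whence a contradiction is obtained from the structural axiom of \cref{prop:WOIP} applied to~$c$, $x^*$, and~$a$. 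Termination of the chain is guaranteed by the finiteness of the poset.
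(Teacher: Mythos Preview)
Your first two paragraphs match the paper's argument exactly: both reduce to \cref{lem:arcInversions} and the fact that the inversion set of a join is the transitive closure of the union of the inversion sets of the joinands. The divergence is in how the inverse map is handled. Having established that the first map equals~$\Psi^{-1}\circ\Phi$ (hence is a bijection), the paper does not re-derive~$\Phi^{-1}\circ\Psi$ from scratch as you do in your third and fourth paragraphs; it instead checks that the second map is a \emph{left inverse} of the first, by observing directly in the NCAD world that the endpoint pairs~$(a,b)$ of the arcs of~$\delta_\meet$ are precisely the maximal increasing cover relations of~$\b{\less}_{\delta_\meet}$ (and dually for~$\delta_\join$). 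Your route via an explicit bijection between descents of~$\sigma$ and maximal decreasing covers of the WOIP is equivalent and does work, but it is the harder computation: the converse you flag as ``the main obstacle'' requires genuine care (for instance, in the case~$c>b$ one does not have~$b$ and~$c$ comparable in the poset \emph{a priori}, since $(b,c)$ need not be a non-inversion of~$\tau$; one has~$a\more c$ instead, and the chain argument must then track the integer value of the terminal element~$x^*$ relative to both~$a$ and~$b$, invoking~$\inv(\sigma)\subseteq\inv(\tau)$ at the right moment to rule out each subcase). The paper's shortcut avoids this case analysis entirely.
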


\begin{proof}
Consider an interval~$[\sigma, \tau]$ of the weak order corresponding to a SCAB~$\delta_\join \sqcup \delta_\meet$.
For any arc~$\alpha_\join$, the decreasing relations of~$\b{\more}_{\alpha_\join}$ are precisely the inversions of~$\b{\sigma}_\join(\alpha_\join)$ by \cref{lem:arcInversions}.
Hence, the decreasing relations of~$\b{\more}_{\delta_\join}$ are precisely the inversions of~$\sigma = \bigJoin \set{\b{\sigma}_\join(\alpha_\join)}{\alpha_\join \in \delta_\join}$ (since the inversion set of a join is the transitive closure of the union of the inversion sets of the joinands).
Similarly, the increasing relations of~$\b{\less}_{\delta_\meet}$ are precisely the non-inversions of~$\tau = \bigMeet \set{\b{\sigma}_\meet(\alpha_\meet)}{\alpha_\meet \in \delta_\meet}$.
Hence~${\b{\more}_{\delta_\join}} \sqcup {\b{\less}_{\delta_\meet}}$ is indeed the WOIP of the interval~$[\sigma, \tau]$ by \cref{prop:WOIP}\,(ii).
Finally, to see that the two maps are inverse to each other, we just need to observe that the relations~$a \, {\b{\less}_{\delta_\meet}} \, b$ created out of the extremities of the arcs~$(a, b, A, B)$ of~$\delta_\meet$ are precisely the maximal cover relations of~$\b{\less}_{\delta_\meet}$ (and similarly for~$\b{\more}_{\delta_\join}$). 
\end{proof}

%%%%%%%%%%%%%%%%

\subsection{Kreweras maps in quotients of the weak order}
\label{subsec:KrewerasBidiagrams}

We finally describe the Kreweras maps defined in~\cref{subsubsec:Kreweras} in all quotients of the weak order in terms of semi-crossing arc bidiagrams.
For this, we first connect the canonical join representation of a permutation to the canonical join representation of the minimal element in its class for a given congruence, as illustrated in \cref{fig:Kreweras}.
In the following proposition, we call weak order on arcs the order~$\alpha \le \alpha'$ if $\b{\sigma}_\join(\alpha) \le \b{\sigma}_\join(\alpha')$ (see \cref{subsec:weakOrderArcs,fig:induction35}).

\begin{proposition}
\label{prop:Kreweras}
Consider an upper ideal~$I$ of the subarc order and a permutation~$\sigma$.
Let~$X$ be the intersection of~$I$ with the lower ideal generated by the non-crossing arc diagram~$\b{\delta}_\join(\sigma)$ in the weak order on arcs.
Let~$Y$ be the set of arcs~$(a, b, A, B)$ of~$X$ such that there is~$a < p < b$ such that both arcs~$(a, p, A \cap {]a,p[}, B \cap {]a,p[})$ and~$(p, b, A \cap {]p,b[}, B \cap {]p,b[})$ belong to~$X$.
Then the non-crossing arc diagram~$\b{\delta}_\join \big( \projDown[\equiv_I](\sigma) \big)$ is the set of maximal elements of~$X \ssm Y$ in the weak order on arcs.
\end{proposition}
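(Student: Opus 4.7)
Writing $\sigma^- \eqdef \projDown[\equiv_I](\sigma)$, my plan is to prove the double inclusion between $\b{\delta}_\join(\sigma^-)$ and $\max(X \ssm Y)$ under the weak order on arcs. The inclusion $\b{\delta}_\join(\sigma^-) \subseteq X$ follows immediately from \cref{prop:canonicalJoinRepresentationProjection} (placing $\CJR(\sigma^-)$ in the lower ideal generated by $\CJR(\sigma) = \b{\delta}_\join(\sigma)$ under the bijection $\alpha \leftrightarrow \b{\sigma}_\join(\alpha)$) combined with \cref{prop:canonicalJoinRepresentationsQuotient} (the joinands of $\sigma^-$ are uncontracted) and \cref{prop:subarcs} (identifying uncontracted join irreducibles with arcs of $I$).

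To establish $\b{\delta}_\join(\sigma^-) \cap Y = \emptyset$, I argue by contradiction: suppose some $\alpha = (a, b, A, B) \in \b{\delta}_\join(\sigma^-)$ splits at $p \in {]a, b[}$ with both halves $\alpha^L, \alpha^R \in X$. The key ingredient is the inversion inequality $\b{\sigma}_\join(\alpha^L) \join \b{\sigma}_\join(\alpha^R) \ge \b{\sigma}_\join(\alpha)$, obtained by case analysis via \cref{lem:arcInversions}: inversions $(u, v)$ of $\b{\sigma}_\join(\alpha)$ lying entirely on one side of $p$ are already inversions of $\alpha^L$ or $\alpha^R$, while inversions with $u < p < v$ are recovered from $(u, p) \in \mathrm{inv}(\b{\sigma}_\join(\alpha^L))$ and $(p, v) \in \mathrm{inv}(\b{\sigma}_\join(\alpha^R))$ by transitivity. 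Since $\alpha^L, \alpha^R$ are uncontracted join irreducibles below $\sigma$ and hence below $\sigma^-$ as minimal representatives of their classes, replacing $\alpha$ by $\{\alpha^L, \alpha^R\}$ in $\CJR(\sigma^-)$ yields another join representation of $\sigma^-$ whose maximal antichain reduction $\tilde{J}$ satisfies $\CJR(\sigma^-) \not\le \tilde{J}$ in the antichain order (no element of $\tilde{J}$ dominates $\alpha$, since one of $\alpha^L, \alpha^R$ is strictly below $\alpha$ while the other is incomparable to $\alpha$, and the remaining elements of $\tilde{J}$ come from the antichain $\CJR(\sigma^-) \setminus \{\alpha\}$), contradicting the minimality of the canonical join representation.

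For the reverse inclusion $\max(X \ssm Y) \subseteq \b{\delta}_\join(\sigma^-)$, I would show every $\gamma \in X \ssm Y$ is dominated in the weak order on arcs by some element of $\b{\delta}_\join(\sigma^-)$. This amounts to the stronger claim that $X$ is contained in the lower ideal of $\b{\delta}_\join(\sigma^-)$, which I expect to prove by induction on the number of inversions of $\sigma$ not in $\sigma^-$, using the iterative substitution $j \mapsto \CJR(j_\star)$ (for $j = \b{\sigma}_\join(\alpha'')$ with $\alpha''$ a contracted arc in the current canonical representation) underlying the proof of \cref{prop:canonicalJoinRepresentationProjection}. Each such substitution replaces $\alpha'' = (a'', b'', A'', B'')$ by at most its two ``splits'' at $\max A''$ and $\min B''$, and the main obstacle will be tracking how the lower ideal shrinks: a given $\alpha \le \alpha''$ with $\alpha \in I$ is either dominated by one of these splits, or spans both in a ``middle'' configuration, and the non-splittability condition $\alpha \notin Y$ is what rules out the middle case, ensuring inductively that $\alpha$ remains in the lower ideal of the refined canonical representation.
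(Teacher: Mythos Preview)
Your first two steps are sound. The inclusion $\b{\delta}_\join(\sigma^-) \subseteq X$ is exactly what the paper invokes, and your argument for $\b{\delta}_\join(\sigma^-) \cap Y = \varnothing$ is in fact more carefully justified than the paper's (you make explicit why $\b{\sigma}_\join(\alpha^L),\b{\sigma}_\join(\alpha^R)\le\sigma^-$ and why the replacement contradicts canonical minimality; the paper only records the inequality $\b{\sigma}_\join(\alpha)\le\b{\sigma}_\join(\alpha^L)\join\b{\sigma}_\join(\alpha^R)$).

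The third step, however, is where your plan diverges from the paper and becomes problematic. First, there is an internal inconsistency: you announce the ``stronger claim'' $X\subseteq\downIdeal[{\b{\delta}_\join(\sigma^-)}]$, but your inductive sketch then uses the hypothesis $\alpha\notin Y$, which only makes sense if you are proving the weaker inclusion $X\ssm Y\subseteq\downIdeal[{\b{\delta}_\join(\sigma^-)}]$. More seriously, the induction you describe does not track $\b{\delta}_\join$ of the intermediate permutations. Replacing a contracted arc $\alpha''$ by the arcs of $\CJR\big((\b{\sigma}_\join(\alpha''))_\star\big)$ yields merely a \emph{join representation} $D'$ of the next permutation $\tau'$, not $\b{\delta}_\join(\tau')$ itself; passing from $D'$ to $\b{\delta}_\join(\tau')$ may shrink the lower ideal, and nothing in your outline controls this. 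At termination you would only obtain $X\subseteq\downIdeal[D_{\mathrm{final}}]$ for some join representation $D_{\mathrm{final}}$ of $\sigma^-$, which is strictly weaker than what you need. (Indeed, note that every arc produced along the way is a \emph{subarc} of a contracted arc, hence is itself contracted since $I$ is an upper ideal; so the process only ``terminates'' by interacting with the other arcs of $\b{\delta}_\join(\sigma)$, and the final $D_{\mathrm{final}}$ can be a redundant join representation quite different from~$\b{\delta}_\join(\sigma^-)$.)

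The paper avoids this entirely with a direct combinatorial argument: it proves that any two \emph{incomparable} arcs of $X\ssm Y$ are automatically non-crossing (a crossing would exhibit a split point witnessing membership in~$Y$). Hence $\max(X\ssm Y)$ is itself a non-crossing arc diagram, say $\b{\delta}_\join(\tau)$. Since every arc of $X$ gives an uncontracted join-irreducible below $\sigma$, one gets $\sigma^-\le\tau\le\sigma$ with $\tau$ minimal in its class, forcing $\tau=\sigma^-$. This bypasses any need to follow the substitution process of \cref{prop:canonicalJoinRepresentationProjection} and is what you are missing.
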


\begin{proof}
By \cref{prop:canonicalJoinRepresentationsQuotient,prop:canonicalJoinRepresentationProjection}, $\b{\delta}_\join \big( \projDown[\equiv_I](\sigma) \big)$ is a non-crossing arc diagram contained in~$X$.
Among all options, we need to choose the non-crossing arc diagram with maximal join.
This implies that the arcs of~$Y$ cannot appear in $\b{\delta}_\join \big( \projDown[\equiv_I](\sigma) \big)$ since
\[
\b{\sigma}_\join(a, b, A, B) \le \b{\sigma}_\join(a, p, A \cap {]a,p[}, B \cap {]a,p[}) \join \b{\sigma}_\join(p, b, A \cap {]p,b[}, B \cap {]p,b[})
\]
for any arc~$(a, b, A, B)$ and any~$a < p < b$ by \cref{lem:arcInversions}.
We finally claim that any two incomparable elements in~$X \ssm Y$ cannot cross.
Hence, the maximal elements of~$X \ssm Y$ form a non-crossing arc diagram, which must therefore be~$\b{\delta}_\join \big( \projDown[\equiv_I](\sigma) \big)$.
To prove the claim, consider two arcs~$\alpha \eqdef (a, b, A, B)$ and~$\alpha' \eqdef (a', b', A', B')$ in~$X$ which are incomparable and crossing.
Assume for instance that there are~$u < v$ such that~${u \in (A \cup \{a\}) \cap (B' \cup \{a'\})}$ and ${v \in (A' \cup \{b'\}) \cap (B \cup \{b\})}$.
We can moreover assume that~$\alpha$ and~$\alpha'$ agree on~$]u,v[$, meaning that~$A \cap {]u,v[} = A' \cap {]u,v[}$ and~$B \cap {]u,v[} = B' \cap {]u,v[}$.
Since $\alpha$ and~$\alpha'$ are incomparable, this implies that~$u \ne a$ or~$v \ne b$.
Moreover, if~$u \ne a$ and~$v \ne b$, then there is a crossing between any arc larger than~$\alpha$ and any arc larger than~$\alpha'$ in the weak order on arcs, so that~$\alpha$ and~$\alpha'$ cannot both belong to~$X$.
We conclude that either~$u \ne a$ or~$v \ne b$, so that precisely one of the arcs
\[
(a, u, A \cap {]a, u[}, B \cap {]a, u[})
\quad\text{and}\quad
(v, b, A \cap {]v, b[}, B \cap {]v, b[})
\]
is non-trivial (not reduced to a single point).
Moreover, this arc belongs to~$X$ since~$\alpha$ does, and the arc~$(u, v, A \cap {]u, v[}, B \cap {]u, v[}) = (u, v, A' \cap {]u, v[}, B' \cap {]u, v[})$ belongs to~$X$ since~$\alpha'$ does.
This implies that~$\alpha$ is in~$Y$ as it can be decomposed into exactly two subarcs that belong to~$X$.
\end{proof}

This enables us to compute the Kreweras maps in quotients of the weak order directly on non-crossing arc diagrams.
For this, let us extend the notations of~\cref{subsubsec:Kreweras} to quotients and transport them to non-crossing arc diagrams.
For an upper ideal~$I$ of the subarc order, each equivalence class of~$\equiv_I$ is an interval~$[x,y]$ of the weak order and thus corresponds to two non-crossing arc diagrams~$\delta_\join \eqdef \b{\delta}_\join(x)$ and~$\delta_\meet \eqdef \b{\delta}_\meet(y)$.
We denote by~$\eta^I_\join$ and~$\eta^I_\meet$ the two opposite maps defined by~$\eta^I_\join(\delta_\meet) = \delta_\join$ and~$\eta^I_\meet(\delta_\join) = \delta_\meet$.
We just write~$\eta_\join$ and~$\eta_\meet$ when~$I$ is the set of all arcs.
Note that~$\eta_\join = \b{\delta}_\join \circ \b{\delta}_\meet^{-1}$ and~$\eta_\meet = \b{\delta}_\meet \circ \b{\delta}_\join^{-1}$ are easily computed from the descriptions of the maps~$\b{\delta}_\join$ and $\b{\delta}_\meet$ (see \cref{subsec:NCADs}) and of their inverses (see the explicit description in~\cite{Reading-arcDiagrams}).
\cref{prop:Kreweras} enables to compute~$\eta^I_\join$ and~$\eta^I_\meet$  in general.

\begin{corollary}
\label{coro:Kreweras}
Consider an upper ideal~$I$ of the subarc order and a non-crossing arc diagram~$\delta_\meet$ with all arcs in~$I$.
Then the non-crossing arc diagram~$\eta^I_\join(\delta_\meet)$ is obtained from~$\eta_\join(\delta_\meet)$ by applying the algorithm of \cref{prop:Kreweras}.
\end{corollary}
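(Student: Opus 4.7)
The plan is to deduce the corollary from \cref{prop:Kreweras} by unwinding the definitions of the two Kreweras maps. The key observation is that, since $\delta_\meet$ has all its arcs in~$I$, the permutation it encodes is the maximum of its $\equiv_I$-class, so the projection appearing in \cref{prop:Kreweras} lands exactly on the minimum of that class, which is precisely what $\eta^I_\join(\delta_\meet)$ represents.

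Concretely, I would first invoke \cref{prop:NCAD} to write $\delta_\meet = \b{\delta}_\meet(y)$ for the unique permutation~$y$ with $\CMR(y) = \set{\b{\sigma}_\meet(\alpha)}{\alpha \in \delta_\meet}$. Combining \cref{prop:kappaChangesColor} with \cref{prop:uncontractedElements,prop:subarcs} identifies the set~$\UMI[\equiv_I]$ with $\set{\b{\sigma}_\meet(\alpha)}{\alpha \in I}$, so the hypothesis that every arc of $\delta_\meet$ lies in~$I$ forces the inclusion $\CMR(y) \subseteq \UMI[\equiv_I]$. The dual of \cref{prop:canonicalJoinRepresentationsQuotient} then ensures that~$y$ is the maximum of its~$\equiv_I$-class, which is therefore the interval~$[x,y]$ with $x \eqdef \projDown[\equiv_I](y)$.

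Next, unfolding the two definitions, we have $\eta_\join(\delta_\meet) = \b{\delta}_\join(y)$ in the full weak order, whereas $\eta^I_\join(\delta_\meet) = \b{\delta}_\join(x) = \b{\delta}_\join(\projDown[\equiv_I](y))$ by the description of $\eta^I_\join$ given just before the corollary. It then suffices to apply \cref{prop:Kreweras} with $\sigma \eqdef y$, which describes an explicit algorithm producing the non-crossing arc diagram $\b{\delta}_\join(\projDown[\equiv_I](y))$ out of $\b{\delta}_\join(y)$. Substituting our identifications yields exactly the claimed equality.

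I do not expect any genuine obstacle: the corollary is a direct composition of \cref{prop:Kreweras} with the definitions of $\eta_\join$ and $\eta^I_\join$. The only step that is not purely formal is the verification that $y$ is maximal in its~$\equiv_I$-class, and this is a short deduction from the hypothesis via the dictionary between arcs and irreducibles recalled above; without that hypothesis one would first have to replace $\delta_\meet$ by $\b{\delta}_\meet(\projUp[\equiv_I](y))$ before running the algorithm, and the assumption precisely makes this replacement trivial.
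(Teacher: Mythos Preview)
Your argument is correct and is precisely the unwinding that the paper leaves implicit: the corollary is stated without proof, as a direct consequence of \cref{prop:Kreweras} together with the definitions of~$\eta_\join$ and~$\eta^I_\join$ recalled in the preceding paragraph. The one nontrivial point you identify --- that the hypothesis $\delta_\meet \subseteq I$ forces $y \eqdef \b{\delta}_\meet^{-1}(\delta_\meet)$ to be maximal in its $\equiv_I$-class, so that $\projDown[\equiv_I](y)$ is the bottom of that class --- is exactly what makes the corollary immediate, and your justification via \cref{prop:kappaChangesColor,prop:uncontractedElements,prop:subarcs} and the dual of \cref{prop:canonicalJoinRepresentationsQuotient} is the intended one.
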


\begin{example}
\label{exm:KrewerasComplement}
When~$I$ is the upper ideal of up arcs corresponding to the sylvester congruence, the description of \cref{coro:Kreweras} can be translated to the classical description of the Kreweras complement of a non-crossing partition.
Namely, the Kreweras complement of a non-crossing partition is obtained by shifting the points and connecting the points in the same connected component.
See \cref{fig:KrewerasComplement}.

\begin{figure}[h]
	\centerline{\includegraphics[scale=.5]{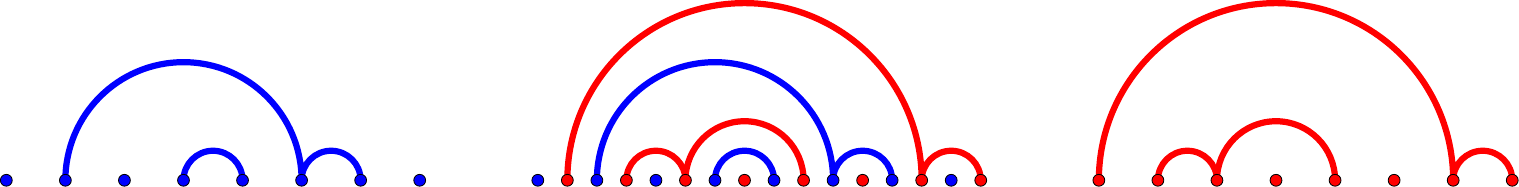}}
	\caption{Classical Kreweras complement on non-crossing partitions.}
	\label{fig:KrewerasComplement}
\end{figure}
\end{example}

\begin{figure}
	\centerline{\includegraphics[scale=.9]{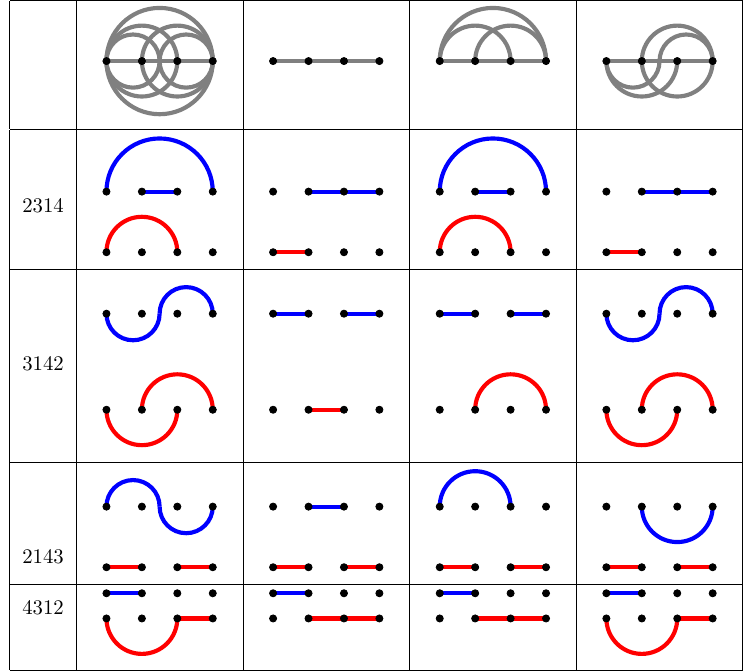}}
	\caption{Illustrations of computations of~$\b{\delta}_\join \big( \projDown[\equiv_I](\sigma) \big)$ and $\eta^I_\join(\delta_\meet)$ in the trivial congruence, the descent congruence, the sylvester congruence, and a generic congruence.}
	\label{fig:Kreweras}
\end{figure}

%%%%%%%%%%%%%%%%%%%%%%%%%%%%%%%%%%%%%%

\addtocontents{toc}{ \vspace{.1cm} }
\section*{Acknowledgements}

We are grateful to J.-C. Novelli for many discussions and suggestions on the content and presentation of this paper, to S.~Giraudo for a careful proofreading of a preliminary version, and to an anonymous referee for helpful comments.

%%%%%%%%%%%%%%%%%%%%%%%%%%%%%%%%%%%%%%%
%
%\section*{Data availability statement}
%
%Data sharing not applicable to this article as no datasets were generated or analyzed during the current study.

%%%%%%%%%%%%%%%%%%%%%%%%%%%%%%%%%%%%%%

\bibliographystyle{alpha}
\bibliography{canonicalComplex}
\label{sec:biblio}

\end{document}